\newtheorem{theorem}{Theorem}[section]
\newtheorem{lemma}[theorem]{Lemma}
\newtheorem{proposition}[theorem]{Proposition}
\newcommand{\al}{\alpha}
\newcommand{\be}{\beta}
\newcommand{\de}{\delta}
\newcommand{\e}{\varepsilon}
\newcommand{\la}{\lambda}
\newcommand{\vp}{\varphi}
\newcommand{\cq}{\mathcal Q}
\newcommand{\cn}{\mathcal N}
\newcommand{\f}{\frac}
\newcommand{\wh}{\widehat}
\newcommand{\ZR}{\mathbb{R}}
\newcommand{\ZZ}{\mathbb{Z}}
\newcommand{\ZN}{\mathbb{N}}
\newcommand{\ti}{\tilde}
\newcommand{\bB}{{\bf B}}
\newcommand{\bS}{{\bf S}}
\newcommand{\cQ}{{\mathcal Q}}
\begin{document}

\title{New estimates of the Maximal Bochner-Riesz operator in the plane }
\author{Xiaochun Li}

\address{
Xiaochun Li\\
Department of Mathematics\\
University of Illinois at Urbana-Champaign\\
Urbana, IL, 61801, USA}

\email{xcli@math.uiuc.edu}
\author{Shukun Wu}

\address{
Shukun Wu\\
Department of Mathematics\\
University of Illinois at Urbana-Champaign\\
Urbana, IL, 61801, USA}

\email{shukunw2@illinois.edu}
\date{}

\begin{abstract}
We prove new $L^p$-estimates with $1<p<2$ for the maximal Bochner-Riesz operator in the plane.  
\end{abstract}
\maketitle

\section{Introduction}
\setcounter{equation}0

Bochner-Riesz multiplier operator is defined in $\mathbb{R}^n$ as 
$$T_t^\lambda f(x)=(2\pi)^{-n}\int_{\mathbb{R}^n} \Bigg(1-\frac{|\xi|^2}{t^2}\Bigg)^\lambda_+\wh{f}(\xi)e^{ix\cdot\xi}d\xi\,,$$
and the associated maximal operator is given by 
\begin{equation}
\label{MBR}
   T^{ \lambda}_\ast f(x)=\sup_{t>0}|T_t^\la f(x)|.
\end{equation}


The study of the maximal Bochner-Riesz operator $T^\la_\ast$ is closely related to  pointwise convergence of the Bochner-Riesz mean $T^\la_tf$ as $t\to\infty$, for any $f\in L^p(\mathbb R^n)$ with $1<p<\infty$.  When $p>2$, the pointwise convergence phenomena are well understood. For instance, 
for $f\in L^p(\ZR^n)$ with $n>2$ and $p\geq 2$, the pointwise convergence for $T^\la_t f$ in the optimal range of $\la$ was shown by Carbery, Rubio de Francia and Vega in \cite{CRV}, via  power weighted $L^2$-estimates.  For $p\geq 2$,  the $L^p$-estimate of $T^\la_*$ in planar case is completely settled by Carbery \cite{C}, who proved the endpoint case $p=4$ for $T^\la_\ast$. The  the best known $L^p$-results with $p\geq 2$  in the higher dimensional cases is due to Lee \cite{L1}. 
For $p>2$, various endpoint bounds for maximal functions associated with radial Fourier multipliers can be found in \cite{L-S}.\\

However, it remains widely open what the smallest $\lambda$ shall be in order to make $T^\lambda_t f$ converge to $f$ almost everywhere
as $t\rightarrow \infty$, for any $f\in L^p(\mathbb R^n)$ with $1<p<2$.   For $p$ between $1$ and $2$, the pointwise convergence problem, due to Stein's maximal principle \cite{St}, is equivalent to build up weak-$( p, p)$ estimate of the maximal Bochner-Riesz operator $T^{\lambda}_*$.
It was conjectured by Tao \cite{T1} that for any $\lambda > \frac{2n-1}{2p}-\frac{n}{2}$, the maximal Bochner-Riesz operator $T^{\lambda}_*$
satisfies
 $L^p$ estimate, i.e., 
\begin{equation}\label{1}
\big\| T^{\lambda}_\ast f\big\|_{p} \lesssim \|f\|_p\,. 
\end{equation}
Tao provided an example proving that \eqref{1} does not hold for $\la<\frac{2n-1}{2p}-\frac{n}{2}$. In addition,  he proved that in the planar case the $L^p$ estimate (\ref{1}) holds provided that
\begin{equation}\label{T-b}
\lambda > \max\big\{\frac{3}{4p}-\frac{3}{8}, \frac{7}{6p}-\frac{2}{3}\big\}\,.
\end{equation}


Tao's bounds (\ref{T-b}) can be strengthened as indicated in Figure 1, where 
the shaded triangular region corresponds to the new estimates we will obtain in this article. 

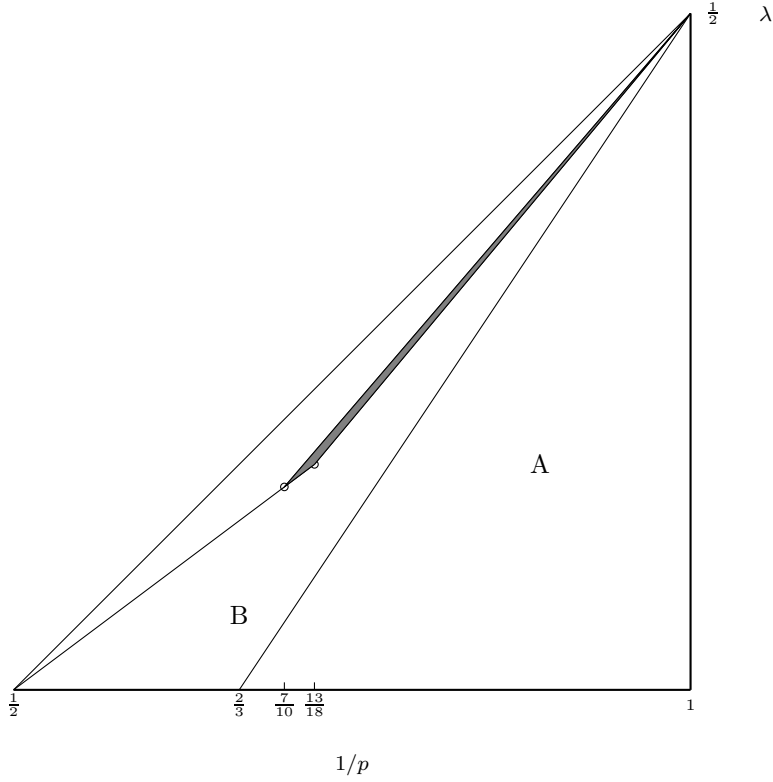
\begin{figure}
\begin{tikzpicture}
\draw[thick] (0,0) -- (9,0) node [anchor=north west] {};
\draw[thick] (9,0) -- (9,9) node [anchor=north east] {};
\node at (4.5,-1) {\footnotesize $1/p$};
\node at (0,-0.2) {\tiny $\frac{1}{2}$};
\node at (3,-0.2) {\tiny $\frac{2}{3}$};
\node at (9,-0.2) {\tiny $1$};
\node at (9.3,9) {\tiny $\frac{1}{2}$};
\node at (10,9) {\footnotesize $\la$};
\draw (3,0) -- (9,9);
\draw (0,0) -- (9,9);
\node at (7,3) {A};
\node at (3,1) {B};
\node at (3.6,-0.2) {\tiny $\frac{7}{10}$};
\draw (3.6,0) -- (3.6,0.1);
\node at (4,-0.2) {\tiny $\frac{13}{18}$};
\draw (4,0) -- (4,0.1);
\draw (3.6,2.7) circle (.5mm);
\draw (4,3) circle (.5mm);
\draw (3.6,2.7) -- (9,9);
\draw (4,3) -- (9,9);
\draw (4,3) -- (0,0);
\draw [fill=gray] (9,9) to (4,3) to (3.6,2.7) to  (9,9) ;
\end{tikzpicture}
\caption{$L^p$ behavior for the maximal Bochner-Riesz operator \eqref{MBR}} 
\label{fig1}
\end{figure}

\begin{theorem}\label{main}
Let $n=2$ and $1<p<2$. Then 
(\ref{1}) holds if $\lambda$ obeys
\begin{equation}
\label{la-p-range}
 \lambda > \la_p=\max\big\{\frac{3}{4p}-\f38, \frac{6}{5p}-\frac{7}{10} \big\}\,.
\end{equation}
\end{theorem}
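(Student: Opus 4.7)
The plan is to sharpen Tao's planar argument by replacing its bilinear step with a more efficient restriction-type estimate. First, by Stein's maximal principle and a standard rescaling, it suffices to linearize and bound $\|T^{\la}_{t(\cdot)}f\|_p \lesssim \|f\|_p$ uniformly in any measurable selection $t:\ZR^2\to[1,2]$. A smooth dyadic partition of $(1-|\xi|^2/t^2)_+^{\la}$ near the circle $|\xi|=t$ splits the linearized operator into pieces $T_j$ whose multipliers are supported in the annulus $\{||\xi|-t(x)|\sim 2^{-j}\}$ with amplitude $\sim 2^{-j\la}$. The theorem reduces to proving a bound of the form
\[
\|T_j f\|_p \lesssim 2^{-j(\la-\la_p)+j\eps}\|f\|_p
\]
for arbitrarily small $\eps>0$, and then summing in $j$ under the hypothesis $\la>\la_p$.

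At each scale $j$, I would partition the annulus into $\sim 2^{j/2}$ tangential plates $\theta$ of dimensions $2^{-j/2}\times 2^{-j}$, writing $T_j=\sum_\theta T_{j,\theta}$. Following the Bourgain--Guth broad/narrow framework, at each $x$ either the sum is essentially supported on a single arc of bounded angular length (the \emph{narrow} case), or two plates $\theta,\theta'$ with angular separation $\gtrsim 1$ both contribute a positive fraction of the total (the \emph{broad} case). The narrow case is handled by rescaling the dominant arc and invoking Carbery's sharp $L^4$ square function bound together with an induction on the number of surviving plates; this accounts for the line $\la=\tfrac{3}{4p}-\tfrac{3}{8}$.

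The heart of the proof is the broad case, where I would establish a bilinear estimate that beats Tao's bound by combining the planar $L^4$ bilinear restriction of C\'ordoba with finer Kakeya/wave-packet information on the $2^{-j/2}\times 2^{-j}$ plates, interpolated against $L^2$ plate-orthogonality. This should yield a sum-of-plates inequality of the schematic form
\[
\Big\|\sum_{\theta\not\sim\theta'}T_{j,\theta}f\cdot T_{j,\theta'}g\Big\|_{L^{p/2}} \lesssim 2^{-2j\la_p+j\eps}\|f\|_p\|g\|_p.
\]
Setting $f=g$ and undoing the bilinearization produces the improved line $\la=\tfrac{6}{5p}-\tfrac{7}{10}$; matching the bilinear gain against the angular multiplicity $2^{j/2}$ is the delicate numerology that picks out precisely this exponent.

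The principal obstacle is that the time cutoff $t(x)$ is $x$-dependent, so the plate decomposition moves with $x$ and translation-invariant restriction estimates cannot be applied as a black box. I would handle this either by pigeonholing $t(x)$ onto a $2^{-j}$-mesh and paying a logarithmic factor, or by lifting to the graph $\{(x,t(x))\}\subset \ZR^3$ and employing cone-adapted wave packets. The remaining technical work is to verify that the refined bilinear inequality is robust under this $x$-dependent setting, and that the broad/narrow exponent balancing lands exactly on the range \eqref{la-p-range}.
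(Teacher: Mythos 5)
Your proposal is a genuinely different route from the paper's, and unfortunately it contains gaps at exactly the two places where the paper does its real work. The paper does not run a Bourgain--Guth broad/narrow dichotomy at all. Its structure is: reduce (via Tao's Calder\'on--Zygmund localization argument, reproduced in the appendix as Proposition~\ref{localization-prop}) to a single-scale maximal operator $S^*$; linearize $t(x)$ onto a $\delta$-mesh; wave-packet decompose; split the wave packets by a \emph{coefficient threshold} $\beta_1$ into $T_1+T_2$; estimate $T_1$ in $L^6$ via Bourgain--Demeter $\ell^2$-decoupling and $T_2$ in $L^1$ via a Schur-test/Kakeya counting after removing an exceptional set controlled by a second parameter $\beta_2$ and the local $TT^*$ $L^2$ bound; optimize $\beta_1,\beta_2$ to obtain the single weak-type bound $\|S^*\chi_E\|_{2,\infty}\lessapprox\delta^{5/18}|E|^{13/18}$; and finally interpolate that bound against the trivial $L^2$ and $L^1$ estimates. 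Both lines of $\la_p$ in \eqref{la-p-range} arise from this one interpolation, not from a narrow/broad split, so your attribution of $\tfrac{3}{4p}-\tfrac38$ to a narrow case and $\tfrac{6}{5p}-\tfrac{7}{10}$ to a broad case does not match the actual mechanism.

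Two specific gaps. First, the claim that ``by Stein's maximal principle and a standard rescaling, it suffices to \ldots bound $\|T^{\la}_{t(\cdot)}f\|_p\lesssim\|f\|_p$ uniformly for $t:\ZR^2\to[1,2]$'' skips the entire globalization issue. For $p<2$, collapsing $\sup_{t>0}$ to $\sup_{t\in[1,2]}$ is not a rescaling; one must sum over dyadic time scales, and this requires the Calder\'on--Zygmund argument of the appendix (the moment conditions on $b_Q$, the splitting of cubes into $E^1_{k,m},E^2_{k,m},E^3_{k,m}$, etc.). Second, the ``refined bilinear inequality'' that is supposed to beat Tao and produce $\tfrac{6}{5p}-\tfrac{7}{10}$ is never stated, and it is not clear it exists in the form you want: C\'ordoba's planar $L^4$ biorthogonality is precisely the ingredient Tao already exploits, so a bilinear $L^4$ gain plus plate-orthogonality cannot by itself move past \eqref{T-b}. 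The paper's improvement comes specifically from the $L^6$ $\ell^2$-decoupling estimate applied to the small-coefficient part, which yields the $\delta^2\beta_1^4$ factor in \eqref{L6}; that is a qualitatively different input from bilinear $L^4$, and the numerology $5/18,\,13/18$ (and hence $18/13$, the critical $p$) falls out of balancing that $L^6$ bound against the $L^1$ Kakeya count and the local $L^2$ estimate. Without producing and proving a concrete replacement for Lemma~\ref{T-1}, the proposal does not close.
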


 Tao's counterexample shows that \eqref{1} fail in the region $A$ in Figure \ref{fig1}.  As shown in the figure, we only improve Tao's result \eqref{T-b} in the range $1<p<10/7$. For $10/7<p<2$, our result coincides with Tao's. The region $B$ remains open. We prove Theorem \ref{main} by combining Tao's method in \cite{T1} with  Bourgain-Demeter's $l^2$-decoupling theorem \cite{BD}.  \\


Throughout the paper,  we use $a\sim b$ to stand for that $ca\leq b\leq Ca$ for some unimportant constants $c$ and $C$. We also use $a\lesssim b$ to represent $a\leq Cb$ for the unimportant constant $C$.
We let $M,N$ be  absolute (big) constants and let $C_\e$ be a constant depending on $\e$. Notice that $C_\e$ may vary from line to line. \\
 
 \noindent
 {\bf Acknowlegement}. The first author is supported by a Simons fellowship in mathematics. 
 The authors would like to express their gratitude to the referee for reading this article carefully and 
 providing numerous valuable suggestions, which help us greatly improve presentation of the paper. \\

\section{A standard reduction}
\setcounter{equation}0

Let $B^n(x,R)$ denote the ball in $\mathbb R^n$,  of radius $R$,  centered at $x$. We let $\{\psi_k\}_{k=0}^\infty$ form a partition of unity of $\ZR^n$ such that $\psi_0$ is a smooth function supported in $B^n(0, 2^M)$; for $k\geq 1$, $\psi_k$ is a smooth function supported on the annulus $A_k =\{x\in \mathbb R^n:  2^{k-1+M}\leq |x|< 2^{k+1+M}\}$.  It is easy to see that $\wh\psi_k(\xi)$ is concentrated on $B(0,2^{1-k-M})$ because $|\partial^{\al}\wh\psi_k|\leq C_{N_\e}2^{n(k+1+M)}(1+|2^{k+1+M}\xi|)^{-N_\e}$ for some large $N_\e$ depends on $\e$, and $\al$ is a multi-index with $|\al|\leq N_\e/2$.
 We define
\begin{equation}
\label{partition-of-unity}
    \psi_{k, t} (x) = \psi_{k}(tx)\,,
\end{equation}
and 
\begin{equation}
K_t^\lambda(x)=\int\left(1- \frac{|\xi|^2}{t^2} \right)^\lambda_+ e^{ix\cdot\xi}d\xi \,.
\end{equation}

Using Fourier inversion formula, we are able to represent 
\begin{eqnarray}\nonumber
T_\ast^\lambda f(x)=\sup_{t>0}\big |\sum_{k\geq 0}(K_t^\lambda\psi_{k,t})\ast f(x)\big |\,,
\end{eqnarray}
which is dominated by, using the triangle inequality, 
\begin{equation}
\sup_{t>0}\big |(K_t^\lambda\psi_{0,t})\ast f(x)\big | + \sup_{t>0}\big |\sum_{k\geq 1}(K_t^\lambda\psi_{k,t})\ast f(x)\big |\,.
\end{equation}


The first term  is controlled  by  Hardy-Littlewood maximal function and hence it satisfies $L^p$ estimates.
To treat the second term,     we recall that  the Bochner-Riesz kernel $K_1^\lambda(x)$ has an asymptotic expansion


$$K_1^\lambda(x)\approx|x|^{-(n+1)/2-\lambda}\left(e^{i|x|}\sum_{j=0}^{\infty}a_j|x|^{-j}+e^{-i|x|}\sum_{j=0}^\infty b_j|x|^{-j}\right)$$
for some constants $a_j,b_j$ as $x\to\infty$.
When $|x|$ is sufficiently large, the principal contribution comes from the first term in the asymptotic expansion.
Henceforth, it suffices to consider the kernel
\begin{equation}
\label{kernel-K}
    \tilde{K}_t^\lambda(x)=t^ne^{i|tx|}|tx|^{-(n+1)/2-\lambda}
\end{equation}
with $x\neq 0$,  
and the associated maximal operator
\begin{equation}
    \tilde{S}_\ast^\lambda f(x)=\sup_{t>0}|\sum_{k\geq 1}(\tilde{K}_t^\lambda\psi_{k,t})\ast f(x)|.
\end{equation}


For each $k$, we define the $\tilde{S}_{\ast,k}^\lambda$ as
\begin{equation}
\label{asym-expanded-MBR}
\tilde{S}_{\ast,k}^\lambda f(x)=\sup_{t>0}|(\tilde{K}_t^\lambda\psi_{k,t})\ast f(x)|.
\end{equation}
We shall point out that the cutoff function $\psi_{k,t}$ below may vary from line to line, but they are essentially
the same function,  since the derivative behaviors as well as the Fourier support behaviors for different $\psi_{k,t}$'s remain the same.



It is easy to see that
\begin{equation}
\|\tilde{S}_{\ast,k}^{\la_p} f\|_{p,\infty}\leq C_\e 2^{\varepsilon k}\|f\|_p
\end{equation}
for any $\varepsilon>0$ implies that for $\lambda>\la_p$ \footnote{One can check this by writing $\|f\|_{p,\infty}$ in the dual form
$$\|f\|_{p,\infty}\sim\sup_{0<|E|<\infty}|E|^{-1/p'}\int f\chi_E$$ 

where $1/p'+1/p=1$. See \cite{G} Chapter 1.4 } 
\begin{equation}
\|T^\lambda_* f\|_{p,\infty}\lesssim \|f\|_p\,,
\end{equation}
following from $|\tilde{S}_{\ast,k}^{\la}f|\lesssim 2^{-k(\la-\la_p)}|\tilde{S}_{\ast,k}^{\la_p}f|$ and summing up all the positive $k$'s. \\

Now let us focus on the plane $\mathbb R^2$ by taking $n=2$.  
Let $a(x,y,t)$ be a smooth function supported on the region  $|x-y|\sim\delta^{-1}, |x|,|y|\lesssim\delta^{-1}$ and $t\sim1 $. Here $x, y\in \mathbb R^2$.   We define

\begin{equation}\label{def-S}
    Sf(x,t)=\delta^{3/2}\int e^{it|x-y|}f(y)a(x,y,t)dy\,,
\end{equation}
and 
\begin{equation}\label{S*}
S^*f(x)=\sup_{t\sim 1} \big |Sf(x, t) \big| \,.
\end{equation}



We introduce one more notation in order to state our technical proposition. Write $A\lessapprox B$ if $A\leq C_\be \delta^{-\be} B$ for some $\be$ much smaller than $\e$. 
The following result allows us to reduce the $L^p$-estimate of the maximal Bochner-Riesz operator to a local $L^p$ estimate of the maximal operator
$S^*$.

\begin{proposition}\label{localization-prop}
Suppose that 
\begin{equation} \label{main-result}
    \|S^*f\|_{p,\infty}\lesssim_\e\delta^{-\la_p}\delta^{-\varepsilon}\|f\|_p
\end{equation}
for any $f\in L^p$.  Then 
\begin{equation}
\label{p-p-estimate}
\|\tilde{S}_{\ast,k}^{\la_p} f\|_{p,\infty}\lesssim_\e 2^{\varepsilon k}\|f\|_p\,.
\end{equation}

\end{proposition}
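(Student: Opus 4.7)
The strategy is to linearize the supremum, partition according to the dyadic scale of the linearizing function $t(\cdot)$, rescale each scale to match the single scale on which $S^*$ is defined, apply the $S^*$ hypothesis after a spatial localization, and combine.

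By a measurable-selection argument, fix $t:\ZR^2\to(0,\infty)$ with $\tilde S_{\ast,k}^{\la_p}f(x)=|T_{k,t(x)}f(x)|$, where I write $T_{k,t}f:=(\tilde K_t^{\la_p}\psi_{k,t})\ast f$. Using the explicit form $\tilde K_t^{\la_p}\psi_{k,t}(z)=t^2 e^{it|z|}|tz|^{-3/2-\la_p}\psi_k(tz)$, decompose $\ZR^2=\bigsqcup_{j\in\ZZ}E_j$ with $E_j:=\{x:t(x)\in[2^{k-j},2^{k-j+1})\}$; on $E_j$ the kernel at $x$ is supported on the annulus $|x-y|\sim 2^j$ with amplitude of size $\delta^{3/2+\la_p}$, where $\delta=2^{-k}$. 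I would then rescale each $E_j$-piece by $u=2^{k-j}x$, $v=2^{k-j}y$, setting $g(v):=f(2^{j-k}v)$ and $\tau(u):=2^{j-k}t(2^{j-k}u)\sim 1$ on $\tilde E_j:=2^{k-j}E_j$; the rescaled operator has kernel supported on $|u-v|\sim\delta^{-1}$ with amplitude $\sim\delta^{3/2+\la_p}$, and the change of variables is $L^{p,\infty}/L^p$-covariant, so only the rescaled estimate needs to be proved.

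Next, localize the rescaled operator in space at the $S$-scale $\delta^{-1}$. Cover $\ZR^2$ by balls $B(z_n,\delta^{-1})$ on a $\delta^{-1}$-lattice with a bounded-overlap partition of unity $\{\eta_n\}$ subordinate to fattenings. For $u\in B(z_n,\delta^{-1})$ the convolution sees only $O(1)$ of the pieces $g\eta_m$, and after translating the relevant $z_m$ to the origin each such piece fits the template for $S$ with amplitude $a(u,v,t)=t^2 b(\delta t(u-v))$ (times ball cutoffs), where $b$ is a smooth bump on $|w|\sim 1$. This yields the pointwise majorization
\begin{equation*}
|T_{k,\tau(u)}(g\eta_m)(u)|\lesssim\delta^{\la_p}S^*(g\eta_m)(u).
\end{equation*}
The hypothesis $\|S^*h\|_{p,\infty}\lesssim_\e\delta^{-\la_p-\e}\|h\|_p$ then gives $\delta^{-\e}=2^{k\e}$ loss per piece; combining via the weak-$L^p$ quasi-triangle over the $O(1)$-sized sum in $m$, the disjoint-supports identity $\|\sum_n h_n\|_{p,\infty}^p\le\sum_n\|h_n\|_{p,\infty}^p$ over the disjoint balls $B(z_n,\delta^{-1})$, and the bounded overlap of $\{\eta_m\}$, produces $\|T_{k,\tau}g\,\chi_{\tilde E_j}\|_{p,\infty}\lesssim 2^{k\e}\|g\|_p$, which rescales back to $\|T_{k,t(\cdot)}f\,\chi_{E_j}\|_{p,\infty}\lesssim 2^{k\e}\|f\|_p$ for each $j\in\ZZ$.

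The step I expect to be the main obstacle is assembling these single-scale estimates into a global bound: naive use of the disjointness of $\{E_j\}$ gives $\|T_{k,t(\cdot)}f\|_{p,\infty}^p\le\sum_j\|T_{k,t(\cdot)}f\chi_{E_j}\|_{p,\infty}^p$, which diverges with a uniform-in-$j$ summand. The fix is to track the localization in the previous step: the $j$-piece only involves $f$ through $f\chi_{A_j}$, where $A_j$ is the $2^j$-neighborhood of $E_j$. One then wants the bounded-overlap property $\sum_j\chi_{A_j}\lesssim 1$ a.e., which forces $\sum_j\|f\chi_{A_j}\|_p^p\lesssim\|f\|_p^p$; establishing this requires a preliminary pigeonhole or stopping-time reduction of the linearization $t(\cdot)$ (restricting $t$ to finitely many dyadic values at an $O(\log)$ cost absorbable into $2^{k\e}$, or exploiting that the annular structure of $\{A_j\}$ for the supremum-achieving $t(\cdot)$ automatically has bounded overlap in the radial direction), and is the most delicate technical point.
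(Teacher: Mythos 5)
Your approach parallels the paper's only in one sub-step and diverges sharply in the main device. The paper proves Proposition~\ref{localization-prop} by running a Calder\'on--Zygmund decomposition of $|f|^p$ at height $1$, writing $f=g+\sum_Q b_Q$ with moment conditions on the $b_Q$'s, partitioning the time axis $\mathbb R^+$ into dyadic windows $J_{2km+r}$ (residues mod $2k$), and then, for each fixed window, sorting the CZ cubes $Q$ into three classes according to whether $l(Q)$ is much smaller than, comparable to, or much larger than the kernel's spatial scale $2^{k-2km}$. Large cubes are handled by support containment, small cubes by the moment conditions (giving a pointwise exponential gain so that the exceptional set is empty), and the comparable-scale class is the one reduced, via rescaling, to the single-scale hypothesis~\eqref{main-result}. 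Crucially, the disjointness of the classes across $m$ (each $Q$ belongs to $E^2_{k,m}$ for exactly one $m$, and the $E^1$/$E^3$ contributions are either void or union-bounded by $\sum_Q|Q|$) is what makes the sum over scales converge.

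You instead linearize the supremum, partition physical space by the dyadic scale of $t(\cdot)$, rescale each piece to the $S$-template, localize and apply the hypothesis. This much is sound and mirrors the paper's rescaling argument in the $E^2$ case. But the step you flag as ``the main obstacle'' --- assembling the single-scale bounds --- is a genuine gap, and neither of your proposed remedies closes it. Bounded overlap of the fattened sets $A_j$ is false: if $t(x)\equiv 2^{-m}$ on the annulus $|x|\sim 2^m$, then $E_j$ is the annulus $|x|\sim 2^{j-k}$, whose $2^j$-neighborhood $A_j$ contains $B(0,C2^j)$ for all large $j$, so the origin lies in infinitely many $A_j$'s and $\sum_j\|f\chi_{A_j}\|_p^p$ is not controlled by $\|f\|_p^p$. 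A pigeonhole in the linearizing function is likewise unavailable: $t(\cdot)$ ranges over $(0,\infty)$, which spans unboundedly many dyadic scales, so there is no finite set of values to restrict to and no $O(\log)$ cost to absorb. This is precisely the failure mode that CZ theory is designed to repair: the good part $g$ is $L^2$-bounded and summed trivially, while the moment conditions on $b_Q$ produce the extra decay that makes the scale sum convergent. Without introducing some cancellation structure of that kind (or a substitute for it), your argument cannot be completed for general $f\in L^p$, $p<2$.
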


\vspace{0.3cm}

Proposition \ref{localization-prop} was proved by Tao \cite{T2}.  For the convenience of readers, we provide a proof  in the appendix, similar to Tao's 
argument.     Applying Proposition \ref{localization-prop},  we see that 
(\ref{main-result}) implies $L^p$-estimate of $T_*^\lambda$ if $\lambda>\la_p$. 
To conclude our main result, Theorem \ref{main}, via an interpolation argument which will be presented below,
it suffices to prove the following estimate.  \\

\begin{proposition}\label{prop}
Let $S^\ast$ be in \eqref{S*} and let $E$ be any measurable set in $\mathbb R^2$. Then
\begin{equation}\label{r-6}
 \|S^*\chi_E\|_{2,\infty}\lessapprox\delta^{5/18}|E|^{13/18}\,.  
\end{equation}
\end{proposition}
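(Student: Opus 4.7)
The plan is to follow Tao's linearization of the maximal operator and then apply the Bourgain--Demeter $\ell^2$-decoupling inequality for the light cone in $\mathbb{R}^3$, viewing $S\chi_E(x,t)$ as a function on space-time. First, I choose a measurable $t:\mathbb{R}^2\to\{t\sim 1\}$ with $|S\chi_E(x,t(x))|\gtrsim S^*\chi_E(x)$. Setting $\Omega_\alpha=\{x:S^*\chi_E(x)>\alpha\}$, the desired inequality \eqref{r-6} is equivalent to $\alpha^2|\Omega_\alpha|\lessapprox\delta^{5/9}|E|^{13/9}$, and this is what I aim to prove, passing through $TT^*$ to move the supremum off the operator.

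Next, I partition $S^1$ into arcs $\theta$ of length $\delta^{1/2}$ and write $S\chi_E=\sum_\theta S_\theta\chi_E$, where $S_\theta$ restricts the $y$-integration to the sector in which $(x-y)/|x-y|\in\theta$. Viewed as a function of $(x,t)\in\mathbb{R}^3$, each $S_\theta\chi_E$ is essentially Fourier-supported in a $\delta^{1/2}$-plank tangent to the light cone $\{\tau=|\xi|\}$ at frequency scale $|\xi|\sim 1$. The Bourgain--Demeter $\ell^2$-decoupling for the cone therefore gives
$$\|S\chi_E\|_{L^6(B^3(0,\delta^{-1}))}\lessapprox\Big(\sum_\theta\|S_\theta\chi_E\|_{L^6}^2\Big)^{1/2}.$$
For a single cap, I would estimate $\|S_\theta\chi_E\|_{L^6}$ by interpolating the trivial pointwise bound coming from the amplitude $\delta^{3/2}$ and the $\delta^{-3/2}$-area of the angular sector against an $L^2$ bound (Plancherel) that records how much of $E$ is seen in direction $\theta$; summing then translates into a Kakeya-like count of how many angular tubes actually meet $E$.

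The final step is to combine the resulting $L^6$ estimate with the trivial $L^2$ bound $\|Sg(\cdot,t(\cdot))\|_{L^2(\mathbb{R}^2)}\lesssim\|g\|_{L^2}$ through real interpolation on the level set $\Omega_\alpha$; tuning the single interpolation parameter should produce the pair of exponents $(5/18,13/18)$ in \eqref{r-6}. The hardest part will be extracting a sharp single-function bound in terms of $|E|$ from the $\ell^2$-sum of $L^6$-norms on caps. Naively summing overcounts, since a typical $y\in E$ sits in angular caps $\theta$ that vary with the base point $x$, while for each fixed $x$ only $O(1)$ directions truly see $y$. Encoding this geometric observation efficiently—essentially a weighted Kakeya-type input feeding the decoupling bound—is what I expect to force the exponent $5/18$, and pinning down the precise numerology looks like the bulk of the work.
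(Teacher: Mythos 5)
Your proposal shares the paper's two pillars---linearization and Bourgain--Demeter decoupling---but it is missing the mechanism that actually produces the exponents $5/18$ and $13/18$, and the version of decoupling you invoke does not directly apply. First, after linearizing $t=t(x)$, the function $Sf(x,t(x))$ is a function on a graph in $\mathbb{R}^2\times\mathbb{R}$, not a free function on $\mathbb{R}^3$ with Fourier support near the cone; so $\ell^2$-decoupling for the light cone in $\mathbb{R}^3$ cannot be applied to it as stated. The paper instead discretizes $t$ into $\delta$-intervals $I_j$, restricts $x$ to the level sets $F_j=\{t(x)\in I_j\}$, and applies the \emph{two-dimensional} circle/parabola decoupling to each fixed-time slice $Sf(\cdot,t_j)$; the rough cutoff $\chi_{F_j}$ then has to be controlled separately, which is precisely where the second half of the argument lives.

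Second and more seriously, a single interpolation between an $L^6$ decoupling estimate and the $L^2$ bound cannot reach \eqref{r-6}; the bound is genuinely a \emph{three-term} estimate with \emph{two} free thresholds. The paper performs a wavepacket decomposition $Sf(x,t_j)\approx\sum_{s\in\mathbf{S}_j}\langle f,\varphi_s\rangle\varphi_s$ and introduces a coefficient threshold $\beta_1$: small-coefficient packets $T_1$ go into $L^6$ via decoupling, gaining a factor $\beta_1^4$ from interpolating $\ell^6$ against $\ell^2$ in the coefficients; large-coefficient packets $T_2$ go into $L^1$ via Schur's test, and the resulting kernel estimate requires a second, \emph{density} threshold $\beta_2$: one removes from $E$ all dyadic cubes $Q$ with $|E\cap Q|\geq\beta_2|Q|^{1/2}$ (those excised pieces are handled by the local $L^2$ bound of Lemma~\ref{loc}). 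It is exactly the geometric lemma that on the pruned set $\tilde E$, $\int|K(x,y)|\,dy\lessapprox\beta_2\delta^{-1}$---using the mutual disjointness of the $F_j$'s and a directional count of rectangles through two points---that encodes the ``Kakeya-type input'' you foresee as ``the bulk of the work''; optimizing over $\beta_1,\beta_2$ in the sum of three level-set bounds then forces $5/18$ and $13/18$. Your sketch leaves this entire mechanism unaddressed, and acknowledging it as the hard remaining step does not fill the gap.
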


\vspace{0.3cm}

Let us see first how to use \eqref{r-6} to conclude Theorem \ref{main}. Passing to the duality of weak type norm, we have 
\begin{equation}
\label{dual-form}
    \sup_{0<|F|<\infty}|F|^{-1/2}\int_{\ZR^2} (S^*\chi_E)(x)\cdot\chi_F(x)dx\lesssim\|S^*\chi_E\|_{2,\infty}
\end{equation}
Since the support of $S^*\chi_E$ is contained in $B^2(0,C\de^{-1})$, we can assume $F\subset B^2(0,C\de^{-1})$. Using \eqref{r-6} and
  $|F|\leq C\de^{-2}$, we have
\begin{equation}
    \de^{1-2/p'}\sup_{0<|F|<\infty}|F|^{-1/p'}\int_{\ZR^2} (S^*\chi_E)(x)\cdot\chi_F(x)dx\lessapprox \de^{5/18}|E|^{13/18}
\end{equation}
for $p=\frac{18}{13}$. Passing back to the weak type norm by duality, we thus have
\begin{equation}
\label{weak-type-p}
     \|S^*\chi_E\|_{\frac{18}{13},\infty}\lessapprox\de^{-1/6}|E|^{13/18}.
\end{equation}


Notice that we have a standard $L^2$ estimate by a $TT^*$-method (See Lemma \ref{loc} for instance)
\begin{equation}
\label{global-L2}
\|S^*\chi_E\|_2\lesssim |E|^{1/2}.
\end{equation}
Moreover, by inserting absolute values into the integral (\ref{def-S}), we get trivially 
\begin{equation}
    \label{trivial-esti}
    \|S^*\chi_E\|_1\lesssim \de^{-1/2}|E|.
\end{equation}
Now we can invoke the real interpolation between \eqref{weak-type-p}  and \eqref{global-L2}
for the range $18/13<p<2$ to obtain 
\begin{equation}\label{+}
  \|S^\ast f\|_p\lessapprox \de^{-(\frac{3}{4p}-\frac{3}{8})}\|f\|_p \,\,\, {\rm if}\,\,\, \frac{18}{13}<p<2\,.
    \end{equation}
In addition, interpolating between \eqref{weak-type-p} and  \eqref{trivial-esti}  for $1<p<18/13$, we get
\begin{equation}\label{-}
 \|S^\ast f\|_p\lessapprox \de^{-(\frac{6}{5p}-\frac{7}{10})}\|f\|_p, \,\,\, {\rm if}\,\,\, 1<p< \frac{18}{13}\,. 
 \end{equation}
(\ref{main-result}) follows from (\ref{+}) and (\ref{-}).   Therefore we can conclude Theorem \ref{main} by Proposition \ref{localization-prop}. \\


Proposition \ref{prop} is our main result. Sections \ref{Model} and \ref{Pf-prop} are devoted to the proof of Proposition \ref{prop}. \\

\section{Model operator and its $L^p$-estimates}\label{Model}
\setcounter{equation}0


From now on we shall focus on the operators $S$ and $S^\ast$ defined in \eqref{def-S} and \eqref{S*}, respectively. 
The maximal operator $S^*$ can be linearized as 
\begin{equation}\label{Line}
S^*f(x, t)=  Sf(x, t(x)) \,,
\end{equation}
for some measurable function $t(x)$  taking values $\sim 1$. 
Notice that, if we restricted $t$ in a $\delta$-neighbourhood, the operator $|Sf(x,t)|$ behaves essentially the same, because $x, y$ are restricted in a $\delta^{-1}$-ball. This can be rigorously proved by using Taylor's expansion of 
the exponential function $e^{it|x-y|}$. Thus we are led to partition the interval $I=[1,2]$ into $\delta^{-1}$ many subintervals $I_j$, of length $\delta$.  Let $t_j$ denote the center of $I_j$ and define 
\begin{equation}\label{defFj}
F_j = \big\{x\in B^2(0, \delta^{-1}):  t(x)\in I_j \big\}\,.
\end{equation}
Clearly, $F_j$'s are disjoint mutually. 
Then we see that 
\begin{equation}\label{3-3}
 S^*f(x, t) = \sum_{j} Sf(x, t(x)) \chi_{F_j} (x) \sim \sum_{j}Sf(x, t_j) \chi_{F_j}(x) \,.
\end{equation}
Here we can replace $t(x)$ by $t_j$ because $t(x)-t_i = O(\delta)$ when $t(x)\in F_j$. Henceforth, we 
will focus on the right side of (\ref{3-3}), which is the principal contribution of  $S^*f(x, t)$. 
We run into some non-linear operators because $F_j$'s depend on $f$. But the non-linearity will not cause any trouble because there is no any interpolation argument used for the non-linear operators in our proof. \\

Now for each $j$, the Fourier transform of the kernel $\delta^{3/2}e^{it_j|x|}$ is essentially supported in the $\delta$-neighborhood of the circle center at 0 with radius $t_j$, with a bounded $L^{\infty}$ norm.  
We see that the Fourier transform of $ Sf(x, t_j)$ concentrates on a $\delta$-neighborhood of the circle $\{\xi\in\mathbb R^2:  |\xi|=t_j\}$, up to a negligible contribution,  because outside of the $\delta$-neighborhood, the Fourier transform of $S(f, t_j)$ decays rapidly as the frequency variable stays away from the origin. 
Hence we know that the main contribution in frequency space comes from those $\xi$ lying in the 
the annulus $\{\xi\in\mathbb R^2:   t_j-\delta  \leq |\xi| \leq t_j+\delta\}$,
that can be partitioned into $\delta^{-1/2}$ many congruent pieces, each of which 
 is essentially a $\delta^{1/2}\times\delta$  rectangle.  We use $\Omega_j$ to denote the collection of those disjoint $\delta^{1/2}\times\delta$  rectangles, whose union is essentially the annulus mentioned above.  
 For each $\omega\in \Omega_j$,  a rectangle is called {\it dual} to $\omega$ if its longer side is perpendicular to the longer side of $\omega$, and 
 its dimensions are $\delta^{-1/2}\times \delta^{-1}$.   We further 
break the physical space into $\delta^{-1/2}\times \delta^{-1}$ dual rectangles $R$'s. 
$R\times \omega$ is called a tile.  We use $c(R)$ to denote the center of the rectangle $R$. 
For any $\omega$ and given coordinate axes of $\mathbb R^2$ generated by sides of $\omega-c(\omega)$, we can represent it by $\omega=\omega_1\times \omega_2$ where
$\omega_1$ is an interval of length $\delta^{1/2}$ and $\omega_2$  is an interval of length $\delta$.
Let 
$$
\wh{\varphi_\omega}(\xi_1, \xi_2)= \prod_{k=1}^2 \frac{1}{|\omega_k|^{1/2}}\wh \varphi \big( \frac{\xi_k-c(\omega_k)}{|\omega_k|}\big)\,.
$$
Here $(\xi_1, \xi_2)$ is the coordinates of $\xi$ under the given coordinate axes mentioned above, and 
 $\varphi$ is a Schwartz function from $\mathbb R$ to $\mathbb R$ whose Fourier transform is nonnegative, supported in a small interval,
 of length $\kappa$ (a fixed small constant), about the origin in $\mathbb R$, and identically $1$ on another smaller interval around the origin. 
For any tile $s= R_s\times \omega_s$, we define a function $\varphi_s$ adapted to the tile $s$ by 
\begin{equation}
\wh{\varphi_s}(\xi) = e^{2\pi i c(R)\cdot \xi} \wh{\varphi_\omega} (\xi)\,. 
\end{equation}
Let $\bS_j$ be the collection of all possible tiles $R\times \omega$'s with $\omega\in\Omega_j$.   Up to a negligible contribution, we
end up with a wavepacket representation of $ Sf(x, t_j) $, 
\begin{equation}\label{wave}
Sf (x, t_j)\approx  \sum_{s\in \bS_j } \langle f, \varphi_s\rangle \varphi_s(x) \,.
\end{equation}
Such a wavepacket representation can be proved directly as in \cite{LL} (Page 30) or by employing inductively the one-dimensional result in \cite{LT} (Page 698).\\

We shall clarify the meaning of $\approx$ in (\ref{wave}). In fact, in a rigorous way, one gets
$Sf(x, t_j)= c\sum_{s}\langle f, \varphi_s\rangle S\varphi_s(x, t_j)$ by the wavepacket decomposition of 
$f$.  The function $S\varphi_s(x, t_j)$ behaves similar to the function $\varphi_s$, since both of them have Fourier transforms supported in $\omega_s$ and they are essentially equal to the modulation factor $e^{2\pi i c(\omega_s)\cdot x}$ times an $L^2$-normalized bump function on $R_s$.  This is the reason why we are able to replace the $S\varphi_s(x, t_j)$ by $\varphi_s$ and end up with the wavepacket reprensention of (\ref{wave}). 
\\


Combining (\ref{wave}) with (\ref{3-3}), we see that 
\begin{equation}\label{S*-wave}
 |S^*f(x,t)|\approx  \big| \sum_j \sum_{s\in \bS_j}  \langle f, \varphi_s\rangle \varphi_s(x) \chi_{F_j}(x)  \big| :=
 \big| Tf (x)\big|\,.
 \end{equation}
 Here the operator $T$ is our model operator which we shall study carefully in this section. \\

We introduce a positive parameter $\beta_1$ to split the model operator into two parts. 
For any positive number $\beta_1$, we set 
\begin{equation}
\bB_1 = \big\{s\in \cup_{j} \bS_j:   \big| \langle f, \varphi_s \rangle \big| < \beta_1 \big\}
\end{equation}
and 
\begin{equation}
\bB_2 = \big\{s\in \cup_{j} \bS_j:   \big| \langle f, \varphi_s \rangle \big| \geq \beta_1 \big\}.
\end{equation}
We further break $Tf$ into  $ T_1f + T_2f $, where 
\begin{equation}
T_kf (x) = \sum_j \sum_{s\in \bS_j \cap \bB_k}  \langle f, \varphi_s\rangle \varphi_s(x) \chi_{F_j}(x)
\end{equation}
for $k\in\{1, 2\}$.  We will estimate $T_1f $ and $T_2 f$ in $L^6$ space and $L^1$ space, respectively.

\begin{lemma}\label{T-1}
Let $f\in L^2$. Then 
\begin{equation}\label{L6}
\int \big | T_1 f\big|^6 dx\lessapprox \delta^2 \beta^4_1 \|f\|_2^2 \,.
\end{equation}
\end{lemma}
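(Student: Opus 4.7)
The plan is to combine three ingredients: the disjointness of the sets $F_j$, Bourgain--Demeter's $\ell^2$-decoupling theorem for the circle at scale $\delta$, and the pointwise smallness hypothesis $|\langle f,\varphi_s\rangle|<\beta_1$ that defines $\bB_1$. First, since the $F_j$'s are mutually disjoint, I would decouple the $L^6$-norm as
$$\int|T_1f|^6\,dx = \sum_j\int_{F_j}\Big|\sum_{s\in\bS_j\cap\bB_1}\langle f,\varphi_s\rangle\,\varphi_s\Big|^6 dx \le \sum_j \|g_j\|_6^6,$$
where $g_j:=\sum_{s\in\bS_j\cap\bB_1}\langle f,\varphi_s\rangle\,\varphi_s$. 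This reduces the task to estimating each $\|g_j\|_6$ separately.

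For fixed $j$, the function $g_j$ has Fourier support in a $\delta$-neighborhood of the circle $\{|\xi|=t_j\}$, partitioned into the $\delta^{1/2}\!\times\!\delta$ caps $\omega\in\Omega_j$. Bourgain--Demeter's $\ell^2$-decoupling then yields $\|g_j\|_6^2 \lessapprox \sum_{\omega\in\Omega_j} \|g_{j,\omega}\|_6^2$, where $g_{j,\omega}:=\sum_{s\in\bS_j\cap\bB_1,\,\omega_s=\omega}\langle f,\varphi_s\rangle\varphi_s$. For each cap, I would estimate $\|g_{j,\omega}\|_6$ by the interpolation $\|g_{j,\omega}\|_6^6\le \|g_{j,\omega}\|_\infty^4\|g_{j,\omega}\|_2^2$. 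The $L^\infty$ bound uses that the spatial tiles $R_s$ with fixed $\omega_s=\omega$ essentially partition the plane, so $\|g_{j,\omega}\|_\infty\lesssim \beta_1\|\varphi_s\|_\infty\sim \beta_1\delta^{3/4}$ by the defining inequality of $\bB_1$; the $L^2$ bound is Bessel for the (almost-orthogonal) system at fixed $\omega$, giving $\|g_{j,\omega}\|_2^2\lesssim A_{j,\omega}:=\sum_{\omega_s=\omega}|\langle f,\varphi_s\rangle|^2$. Therefore $\|g_{j,\omega}\|_6^6\lesssim \beta_1^4\delta^3 A_{j,\omega}$, and taking sixth roots then applying Hölder over the $\delta^{-1/2}$ many caps gives
$$\sum_{\omega\in\Omega_j}\|g_{j,\omega}\|_6^2 \lesssim \beta_1^{4/3}\delta \sum_{\omega}A_{j,\omega}^{1/3} \lesssim \beta_1^{4/3}\delta^{2/3}A_j^{1/3}, \qquad A_j := \sum_\omega A_{j,\omega}.$$
Cubing yields $\|g_j\|_6^6 \lessapprox \beta_1^4\delta^2 A_j$.

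Finally, I would sum over $j$ and invoke $\sum_j A_j\lesssim \|f\|_2^2$, which follows from the bounded overlap of the $\delta$-annuli around the $\delta$-separated radii $t_j$ combined with Bessel at each $j$; this produces the desired $\int|T_1f|^6\lessapprox \delta^2\beta_1^4\|f\|_2^2$. The main obstacle is making the wavepacket-level application of decoupling rigorous: the $\varphi_s$ are Schwartz rather than sharp cutoffs, so one needs a mild envelope argument to absorb the tails of $\varphi_s$'s neighboring tiles into the $\lessapprox$-notation, and similarly to justify that the almost-orthogonality bounds above are tight up to harmless $\delta^{-\beta}$ factors. Once that bookkeeping is in place, the numerology presented above delivers the lemma cleanly.
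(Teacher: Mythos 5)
Your proof is correct and follows essentially the same route as the paper: disjointness of the $F_j$'s, Bourgain--Demeter decoupling for each fixed $j$, reduction to single-cap pieces via H\"older over the $O(\delta^{-1/2})$ caps, the $|\langle f,\varphi_s\rangle|<\beta_1$ bound, and Bessel. The only cosmetic difference is that you estimate $\|g_{j,\omega}\|_6$ via $L^\infty$--$L^2$ interpolation, whereas the paper uses the essential disjointness of the $R_s$'s with fixed $\omega_s=\omega$ to write $\|g_{j,\omega}\|_6^6\lesssim\sum_s|\langle f,\varphi_s\rangle|^6\|\varphi_s\|_6^6$ directly; the numerology is identical.
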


\begin{proof}

The proof relies on the Bourgain-Demeter's $l^2$-decoupling theorem \cite{BD}. Let us state their
result in $\ZR^2$ here. \\


\begin{theorem}[Bourgain-Demeter]\label{ThmBD}
Let $S$ be a parabola $ \{(\xi, \xi^2): |\xi|\leq 1\}$ in $\ZR^2$. Let $\cn_{\de}$ stand for  $\de$-neighbourhood of $S$ and let $P_{\de}$ be the finite overlapping cover of $\cn_{\de}$ using curved region $\theta=\{(\xi,c+\xi^2):\xi\in I_\theta, |c|\leq2\de\}$,
where $I_\theta$ is a $\de^{1/2}$-lattice interval contained in $[-1,1]$. Denote by $f_\theta$ the Fourier restriction of $f$ to $\theta$. If supp$(\wh{f})\subset N_\de(S)$, then for any $\be>0$, we have
\begin{equation}
\label{l2-decoupling-elliptic}
    \|f\|_6\leq C_\be\de^{-\be}\Big(\sum_{\theta\in P_\theta}\|f_\theta\|_6^2\Big)^{1/2}.
\end{equation}
\end{theorem}

\vspace{3mm}

From Theorem \ref{ThmBD} we obtain


\begin{equation}
    \int \big | T_1f\big|^6 dx\leq C_\be\de^{-\be} \sum_j \bigg( \sum_{\omega\in \Omega_j} \big\|  \sum_{\substack{ s\in \bS_j\cap \bB_1 \\ \omega_s=\omega }}\langle f, \varphi_s\rangle \varphi_s \big\|_6^2  \bigg)^3\,,
\end{equation}
which is bounded by, following from H\"older's inequality, 
\begin{equation}\label{6-6}
C_\be\de^{-\be}\delta^{-1}\sum_j  \sum_{\omega\in \Omega_j} \big\|  \sum_{\substack{ s\in \bS_j\cap \bB_1 \\ \omega_s=\omega }}\langle f, \varphi_s\rangle \varphi_s \big\|_6^6\,,
\end{equation} 
since there are $O(\delta^{-1/2})$ many $\omega$'s in each $\Omega_j$. \\

The functions $\varphi_s$'s are supported essentially  in $R_s$'s, which are pairwise disjoint if $\omega_s$'s are fixed as $\omega$. 
Thus, we see that 
$$
(\ref{6-6})\lessapprox \delta^2 \sum_{s\in \cup_j \bS_j\cap \bB_1} \big | \langle f, \varphi_s\rangle\big|^6\lesssim 
\delta^2 \beta_1^4 \sum_{s} \big | \langle f, \varphi_s\rangle\big|^2 \lesssim \delta^2\beta_1^4 \|f\|_2^2\,,$$
yielding the desired estimation (\ref{L6}). 
\end{proof}

We now introduce the second parameter $\beta_2$ to be chosen later.
Let $E$ be a given measurable set in $\mathbb R^2$ and 
let $\mathcal{Q}$ be a collection  of maximal dyadic cubes $Q$'s such that
$$|E\cap Q|\geq\beta_2|Q|^{1/2}\,.$$
We set
\begin{equation}\label{Eti}
 \tilde{E}=E\setminus\bigcup_{Q\in\mathcal{Q}}Q \,.
\end{equation}

\begin{lemma}\label{T-2}
Suppose that $f\in L^2$ is supported in $\ti E$. Then 
\begin{equation}\label{L-1-est}
\big\|T_2 f \big\|_1\lessapprox \beta_1^{-1}\delta^{-1/4}\beta_2 \|f\|_2^2\,.
\end{equation}
\end{lemma}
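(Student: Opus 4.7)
The plan is to split $\|T_2 f\|_1$ into contributions from individual tiles and to extract the three factors $\beta_1^{-1}$, $\beta_2$, and $\delta^{-1/4}$ from different sources: $\beta_1^{-1}$ comes from the threshold $|\langle f,\varphi_s\rangle|\geq\beta_1$ defining $\bB_2$; $\beta_2$ comes from the density property of $\tilde E$ inherited from the maximality of $\mathcal{Q}$; and the scale $\delta^{-1/4}$ comes from the geometry of the dual rectangles $R_s$.

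I would begin with the triangle inequality
\[
\|T_2 f\|_1 \;\leq\; \sum_{s\in\bB_2} |\langle f,\varphi_s\rangle|\int_{F_{j}} |\varphi_s(x)|\,dx,
\]
where $j=j(s)$ is the unique index with $\omega_s\in\Omega_j$, and then use the elementary inequality $|\langle f,\varphi_s\rangle|\leq\beta_1^{-1}|\langle f,\varphi_s\rangle|^2$ to pull out the factor $\beta_1^{-1}$. Next I would partition each dual rectangle $R_s$ (of dimensions $\delta^{-1/2}\times\delta^{-1}$) into $\delta^{-1/2}$ dyadic squares of side $\delta^{-1/2}$. By the maximality of $\mathcal{Q}$, each such subsquare $Q$ not contained in $\bigcup\mathcal{Q}$ satisfies $|Q\cap\tilde E|\leq|Q\cap E|<\beta_2|Q|^{1/2}=\beta_2\delta^{-1/2}$; for subsquares contained in some cube of $\mathcal{Q}$, the intersection with $\tilde E$ is empty. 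Summing over the $\delta^{-1/2}$ subsquares yields $|R_s\cap\tilde E|\lesssim\beta_2\delta^{-1}$, and combining with $\|\varphi_s\|_\infty\sim\delta^{3/4}$ produces the two key density-improved wavepacket bounds
\[
\|\varphi_s\chi_{\tilde E}\|_1\lesssim\beta_2\delta^{-1/4} \quad\text{and}\quad \|\varphi_s\chi_{\tilde E}\|_2^2\lesssim\beta_2\delta^{1/2},
\]
which feed into the pairings via $\langle f,\varphi_s\rangle=\langle f,\varphi_s\chi_{\tilde E}\rangle$ (since $\mathrm{supp}(f)\subset\tilde E$).

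To close the argument I would combine the density-improved wavepacket norms above with the frame quasi-orthogonality $\sum_{j}\sum_{s\in\bS_j}|\langle f,\varphi_s\rangle|^2\lesssim\|f\|_2^2$, which follows from Plancherel and the bounded-overlap cover of frequency space by the $\delta$-annuli around the circles $|\xi|=t_j$. The remaining step is to bridge the spatial integral $\int_{F_{j}}|\varphi_s|$ with the density-side norm $\|\varphi_s\chi_{\tilde E}\|_1\lesssim\beta_2\delta^{-1/4}$ by a duality or $TT^*$-type rearrangement, using that $f$ is concentrated on $\tilde E$ to transfer the $\beta_2\delta^{-1/4}$ gain from the input to the output side and then summing over $s$ by quasi-orthogonality.

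The main obstacle will be precisely this last bridging step: the linearization set $F_j$ is determined by the supremum in $t$ and has no a priori density relation to $\tilde E$, so one cannot directly replace $\int_{F_j}|\varphi_s|$ with $\int|\varphi_s|\chi_{\tilde E}$. The density must be routed through the input side via the wavepacket orthogonality, and the book-keeping must be arranged so that only one power of $\beta_1^{-1}$ (rather than $\beta_1^{-2}$) appears and the sharp scale factor $\delta^{-1/4}$ (rather than the trivial $\delta^{-3/4}$) emerges in the final bound.
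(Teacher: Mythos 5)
You have correctly identified the three sources of the factors $\beta_1^{-1}$, $\beta_2$, and $\delta^{-1/4}$, and you have correctly diagnosed the main difficulty: the linearization sets $F_j$ carry no density information, so the $\beta_2$ gain must be extracted from the input side where $f$ is supported in $\tilde E$. You even name the right tool --- a $TT^*$/Schur rearrangement. But you stop there; the ``bridging step'' you flag as the main obstacle is not a remaining technicality, it is the entire content of the proof, and without it the argument is incomplete.

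Here is what actually closes the gap in the paper. After pulling out $\beta_1^{-1}$, one is left with $\delta^{3/4}\sum_j\sum_{s\in\bS_j}|\langle f,\varphi_s\rangle|^2\,|R_s\cap F_j|$. Because the coefficient $|\langle f,\varphi_s\rangle|^2$ is quadratic in $f$ and $f$ is supported in $\tilde E$, this quantity can be written as $\delta^{3/4}\iint f(x)\overline{f(y)}K(x,y)\,dx\,dy$ with
\[
K(x,y)=\chi_{\tilde E}(x)\,\chi_{\tilde E}(y)\sum_j\sum_{s\in\bS_j}\varphi_s(x)\overline{\varphi_s(y)}\,|R_s\cap F_j|,
\]
and the point is that \emph{both} the $x$- and $y$-variables are now restricted to $\tilde E$. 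Schur's test reduces the lemma to the kernel bound $\int|K(x,y)|\,dy\lessapprox\beta_2\delta^{-1}$. In establishing that bound, three things happen that your sketch does not supply: (i) the $|R_s\cap F_j|$ weights are eliminated by summing over $j$ and using that the $F_j$'s are pairwise disjoint (so $\sum_j|R\cap F_j|\leq|R|$ for each fixed dual rectangle $R$) --- this is where $F_j$ disappears entirely from the analysis; (ii) one must count, for a fixed pair $x,y$ with $|x-y|\sim 2^k$, how many dual rectangles $R$ contain both points, and the answer $O(\min\{\delta^{-1/2},\delta^{-1}2^{-k}\})$ is a genuine geometric input; (iii) the density property of $\tilde E$ is then inserted via the annulus bound $|A_k(x)\cap\tilde E|\lesssim\beta_2 2^k$ and summed over dyadic scales $2^k\leq\delta^{-1}$.

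Your per-tile density bounds $\|\varphi_s\chi_{\tilde E}\|_1\lesssim\beta_2\delta^{-1/4}$ and $\|\varphi_s\chi_{\tilde E}\|_2^2\lesssim\beta_2\delta^{1/2}$ are correct but are not the right currency: what is needed is the density of $\tilde E$ on dyadic annuli about $x$, weighted by the rectangle-overlap count, inside the Schur integral. Without (i)--(iii) you cannot convert the density of $\tilde E$ and the geometry of the tubes into the stated bound, and in particular the frame inequality $\sum_s|\langle f,\varphi_s\rangle|^2\lesssim\|f\|_2^2$ alone would only give the lossy estimate $\beta_1^{-1}\delta^{-3/4}\|f\|_2^2$, missing the $\beta_2\delta^{1/2}$ gain. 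So the proposal correctly locates the difficulty and the right general strategy, but leaves the actual mechanism --- the Schur kernel estimate and the geometric overlap count --- unproved.
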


\begin{proof}

Inserting absolute values and using the definition of $\bB_2$, we get 
\begin{equation}
\big| T_2f(x)\big|
 \lesssim \beta_1^{-1} \sum_j \sum_{s\in \bS_j } \big| \langle f, \varphi_s\rangle\big|^2 \big| \varphi_s(x)\big| \chi_{F_j}(x)\,.
 \end{equation}

Taking $L^1$-norm for both sides, we have 
\begin{equation} \label{1-1}
\big\| T_2f \big\|_1
\lesssim \beta_1^{-1}\delta^{3/4} \sum_j\sum_{s\in\bS_j }|\langle f,\varphi_{s}\rangle|^2  |R_s\cap F_j|\,.
\end{equation}
The right side of (\ref{1-1}) can be represented as 
$$
\beta_1^{-1}\delta^{3/4} \int \int  f(x) \overline{ f(y)} K(x, y) dx dy \,,$$
where the kernel  $K$ is given as 
\begin{equation}
K(x,y)=\chi_{\ti E}(x)  \chi_{\ti E}(y)  \sum_j\sum_{s\in \bS_j}\varphi_{s}(x)\overline{\varphi_{s}(y)}\big|R_{s}\cap F_j\big|\,.
\end{equation}
It is clear that (\ref{L-1-est}) follows from 
\begin{equation}
    \int \int  f(x) \overline{ f(y)} K(x, y) dx dy \leq \|f\|_2\big(\int\big|\int K(x,y)f(y)dy\big|^2dx\big)^{1/2}
\end{equation}
and
\begin{equation}\label{K-est}
\int\big| K(x, y) \big| dy \lessapprox \beta_2\delta^{-1}\, 
\end{equation}
by employing Schur's test for the operator given by $\int K(x, y) f(y) dy$.



We now turn to a proof of (\ref{K-est}).  From the definition of $\varphi_s$,  we see that 
$ |\varphi_s(x)|$ satisfies 
\begin{equation}
\big|\varphi_s(x)\big| \leq |R_s|^{-1/2} \chi_{R_s}  + \sum_{k=1}^\infty 2^{-1000k} |2^kR_{s}|^{-\f12}\chi_{2^k R_s}(x)\,,
\end{equation}
where $2^k R_s$ is a $2^k$-dilation of $R_s$.  
Due to the fast decay factor $2^{-1000k}$, $|\varphi_s(x)|$ can be treated as 
$ |R_s|^{-1/2} \chi_{R_s} $.  
Therefore, up to a negligible term which can be treated similarly, we have



\begin{equation}\label{K-est-1}
\int\big| K(x, y) \big| dy \lessapprox  \delta^{3/2
}\chi_{\ti E}(x) \int \sum_j\sum_{s\in \bS_j}\chi_{R_{s}}(x)\overline{\chi_{R_{s}}(y)}\big|R_{s}\cap F_j\big|  \chi_{\ti E}(y) dy\,.
\end{equation}
We use $\Theta$ to denote the set consisting of all possible directions of $R_s$'s. 
Here the direction of a rectangle means direction of its longer side. 
Clearly there are at most $O(\delta^{-1/2})$ many elements in $\Theta$.  For any $\theta\in\Theta$, we tile $\mathbb R^2$ by 
$\delta^{-1/2}\times \delta^{-1}$-rectangles with direction $\theta$. Let $\mathcal R_\theta$ denote the collection of all possible such rectangles.
Involving Fubini's theorem and (\ref{K-est-1}), we have 
\begin{equation}
 \int\big| K(x, y) \big| dy \lessapprox \delta^{3/2}
 \chi_{\ti E}(x) \int \sum_{\theta\in\Theta} \sum_{R\in \mathcal R_\theta}\chi_{R}(x)\overline{\chi_{R}(y)}\sum_j\big|R\cap F_j\big|  \chi_{\ti E}(y) dy\,.
\end{equation}


Using the disjointness of $F_j$'s, we see that 
\begin{equation}
   \int\big| K(x, y) \big| dy \lessapprox
    \chi_{\ti E}(x) \int \sum_{\theta\in\Theta} \sum_{R\in \mathcal R_\theta}\chi_{R}(x)\overline{\chi_{R}(y)}  \chi_{\ti E}(y) dy\,.
\end{equation}
The localization argument allows us to focus on only those $R$ contained in a $\delta^{-1}$-ball in $\mathbb R^2$.
Hence we can restrict the region of $y$ in a $\delta^{-1}$-neighborhood of $x$.  We partition this neighborhood into annuli $A_k(x)=\{y: |y-x|\sim 2^k\}$'s, $k\leq \log\delta^{-1}$.   Then 
 $$
  \int \sum_{\theta\in\Theta} \sum_{R\in \mathcal R_\theta}\chi_{R}(x)\overline{\chi_{R}(y)}  \chi_{\ti E}(y) dy
 \lesssim \sum_{k\leq \log\delta^{-1}} \int_{A_k(x)\cap \ti{E}}   \sum_{\theta\in\Theta} \sum_{R\in \mathcal R_\theta}\chi_{R}(x)\overline{\chi_{R}(y)}dy\,.
   $$
The crucial observation is that given $x, y$ with $|x-y|\sim 2^k$, there are at most 
$O(\min\{\delta^{-1/2}, \delta^{-1}2^{-k}\})$ many $(\theta, R) \in \Theta\times \mathcal R_\theta$ such that $x\in R$ and $y\in R$. This is because for each $\theta$, there are at most $O(1)$ many $R$'s containing both $x$ and $y$, and the total number of $\theta$'s  that contribute to $\chi_R(x)\chi_R(y)$ is bounded above by $O(\min\{\delta^{-1/2}, \delta^{-1}2^{-k}\})$.
Therefore we can bound 




\begin{equation}\label{K1}
\int\big| K(x, y) \big| dy \lessapprox  \sum_{k\leq \log\delta^{-1}} \min\{\delta^{-1/2}, \delta^{-1}2^{-k}\} \big| A_k(x)\cap \ti E\big|\,.
\end{equation}
Observe that $A_k(x)$ can be covered by $O(1)$ many dyadic $2^k$-cubes, say $Q$'s. Recalling the definition (\ref{Eti}) of $\ti E$, we see that 
\begin{equation}\label{K2}
\big| A_k(x)\cap \ti E\big| \lesssim \sum_{Q}\big| Q\cap \ti E\big| \lesssim \beta_2 2^k\,, 
\end{equation}
because only those $Q$ with $|Q\cap E|\leq \beta_2|Q|^{1/2}$ meet the set $\ti E$ due to the definition 
of $\ti E$. 
Putting (\ref{K1}) and (\ref{K2}) together, we obtain 
$$
\int\big| K(x, y) \big| dy \lessapprox \sum_{k\leq \log\delta^{-1}}\beta_2 \min\big\{\delta^{-1/2}2^k, \delta^{-1} \big\}
\lessapprox \beta_2 \delta^{-1}\,,
$$
yielding the desired (\ref {K-est}), from which (\ref{L-1-est}) follows by Schur's test.  

\end{proof}

\section{Proof of Proposition \ref{prop}}\label{Pf-prop}
\setcounter{equation}0

We provide a proof of Proposition \ref{prop} in this section. First  we state a local $L^2$ result, which was proved by Tao \cite{T1} via a use of $TT^*$ method.  

\begin{lemma}\label{loc}
Let $Q\subset B^2(0,\delta^{-1})$ be a cube, and $\psi_Q(x)$ be the bump function on $Q$. Then 
\begin{equation} \label{eq1}
\big\| S^*(\psi_Qf)\big\|_2\lesssim\delta^{1/2}|Q|^{1/4}\|f\|_2
\end{equation}
holds for any $f\in L^2$. 
\end{lemma}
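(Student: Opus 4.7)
The plan is to use the $TT^*$ method, as the paper notes was Tao's original approach. First I would linearize the maximal operator: write $S^*f(x) = Lf(x) := Sf(x, t(x))$ for some measurable function $t(x)\sim 1$, as in \eqref{Line}. The lemma then reduces to showing $\|L\psi_Q\|_{L^2\to L^2}^2 \lesssim \delta\,|Q|^{1/2}$.

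I would apply $TT^*$ in the form $\|L\psi_Q\|_{L^2\to L^2}^2 = \|L\psi_Q^2 L^*\|_{L^2\to L^2}$ (rather than $\|\psi_Q L^*L\psi_Q\|$). The crucial advantage of this choice is that the measurable $t(x)$ and $t(z)$ will appear only as frozen parameters inside the subsequent $y$-integration, so stationary-phase analysis in $y$ is unobstructed by the non-smoothness of $t$. The kernel of $L\psi_Q^2 L^*$ is
\[
K(x,z) = \delta^3 \int \psi_Q(y)^2 \, e^{i[t(x)|x-y|-t(z)|z-y|]} a(x,y,t(x))\overline{a(z,y,t(z))}\, dy.
\]

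Next I would carry out a non-stationary-phase analysis. The gradient of the phase $\Phi(y) = t(x)|x-y|-t(z)|z-y|$ is $\nabla_y\Phi = t(x)(y-x)/|y-x| - t(z)(y-z)/|y-z|$, with magnitude $G = \bigl(t(x)^2 + t(z)^2 - 2t(x)t(z)\cos\theta\bigr)^{1/2}$, where $\theta$ is the angle at $y$ between the directions to $x$ and $z$. Since $|y-y_Q| \leq |Q|^{1/2}$ is much smaller than the distances $|x-y|,|z-y|\sim \delta^{-1}$, the magnitude $G$ is essentially a function of $(x,z)$ alone on $Q$. Integrating by parts in $y$ then yields $|K(x,z)| \lesssim \delta^3|Q|\bigl(1+G|Q|^{1/2}\bigr)^{-N}$ for every $N$. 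Applying Schur's test $\|L\psi_Q^2 L^*\|_{L^2\to L^2} \leq \sup_x \int |K(x,z)|\,dz$ and splitting the $z$-integral into (i) the ``near-collinear, near-equal-$t$'' tube where $G \lesssim |Q|^{-1/2}$, a region of size roughly $(|Q|^{1/2}\delta)^{-1} \times \delta^{-1}$ around the ray from the centre of $Q$ through $x$, on which the trivial bound $|K|\lesssim \delta^3|Q|$ gives a contribution $\sim \delta |Q|^{1/2}$, and (ii) the complement, on which the rapid $(G|Q|^{1/2})^{-N}$ decay contributes the same order, produces the desired bound $\sup_x \int |K(x,z)|\,dz \lesssim \delta |Q|^{1/2}$.

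The hard part will be the stationary-phase step: the critical set of $\Phi$ is a one-dimensional ray rather than an isolated point, so the oscillatory integral is degenerate, and the geometry of the tube where $G$ is small must be tracked carefully to pull out exactly the factor $|Q|^{1/2}$ in Schur's integral. One also has to balance the derivatives of the bump $\psi_Q^2$ (each costing $|Q|^{-1/2}$) against the gain $(G|Q|^{1/2})^{-1}$ from each integration by parts, and check that all constants are uniform in $t(x), t(z) \in [1,2]$.
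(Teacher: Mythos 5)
The paper itself does not prove Lemma~\ref{loc}; it records the statement and attributes it to Tao \cite{T1}, noting only that it comes from a $TT^*$ argument. Your sketch is exactly that $TT^*$ argument, and its structure is correct: linearize the supremum via $t(x)$, pass to $\|L\psi_Q^2L^*\|$ so that the measurable parameters $t(x),t(z)$ are frozen outside the $y$-integral, estimate the kernel $K(x,z)$ by non-stationary phase, and close with Schur's test. The arithmetic also works out in the main range: the trivial bound $\delta^3|Q|$ on a tube of area $\sim|Q|^{-1/2}\delta^{-2}$ gives exactly $\delta|Q|^{1/2}$, and the dyadic tail over $G\sim 2^j|Q|^{-1/2}$ sums.

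The one place you should pin down more than ``track the tube geometry carefully'' is the scale at which integration by parts actually gains. Each step of IBP costs roughly $|\nabla A|/G\sim|Q|^{-1/2}/G$ from the bump and $|\nabla^2\Phi|/G^2\sim\delta/G^2$ from the phase, and --- more to the point --- $\inf_{y\in Q}G(\cdot,y)$ can differ from $G$ at the centre of $Q$ by as much as $|Q|^{1/2}\delta$, since $|\nabla_yG|\lesssim\delta$. For $|Q|\lesssim\delta^{-1}$ this variation sits below the threshold $|Q|^{-1/2}$, so your tube $\{G\lesssim|Q|^{-1/2}\}$ is the right set and the Schur count closes. For $\delta^{-1}\lesssim|Q|\leq\delta^{-2}$ the effective tube widens to angular scale $|Q|^{1/2}\delta$ and the naive Schur bound degrades to $\delta^2|Q|^{3/2}$, which overshoots $\delta|Q|^{1/2}$; at $|Q|\sim\delta^{-2}$ even a stationary-phase improvement on $|K|$ leaves $\sup_x\int|K|\,dz$ of size about $\delta^{-1/2}$ rather than $O(1)$, so the absolute-value Schur test is genuinely lossy there. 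That regime needs either an exploitation of cancellation along the degenerate critical ray, or a decomposition of $Q$ into $\delta^{-1/2}$-side subcubes combined with almost-orthogonality (Cotlar--Stein), using that $(L\psi_{Q_1})(L\psi_{Q_2})^*=0$ for disjoint subcubes and that $\psi_{Q_1}L^*L\psi_{Q_2}$ decays in the separation. This is precisely the ``degenerate critical ray'' difficulty you flagged at the end; I am just locating the scale $|Q|\sim\delta^{-1}$ past which it actually bites.
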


We aim to control $\|S^*\chi_E\|_{18/13, \infty}$.  Clearly it follows from the triangle inequality that 
\begin{equation}\label{tri-12}
 S^*\chi_E \leq S^{*}\chi_{E\backslash \ti E}  +  S^*\chi_{\ti E}\,,
 \end{equation}
We now prove a weak $L^2$ estimate for $S^{*}\chi_{E\backslash \ti E}$. 

\begin{lemma}\label{lem-w-2}
For any $\alpha>0$, 
\begin{equation}
\label{eq2}
\big | \{x\in \mathbb R^2: S^{*}\chi_{E\backslash \ti E}(x)\geq\alpha\} \big |\lesssim\alpha^{-2}\delta\beta_2^{-1}|E|^2.
\end{equation}
\end{lemma}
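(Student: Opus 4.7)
My plan is to reduce the weak-$L^2$ estimate to a strong $L^2$ bound via Chebyshev's inequality: it will suffice to establish
\[
\|S^*\chi_{E\setminus \tilde E}\|_2^2 \lesssim \delta\,\beta_2^{-1}\,|E|^2,
\]
and the rest of the argument will extract this by combining Lemma~\ref{loc} with the selection criterion that defines $\mathcal Q$.

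To do so I would exploit the structure of $E\setminus\tilde E$ by localizing to the disjoint cubes of $\mathcal Q$. By the definition \eqref{Eti} of $\tilde E$ we have $E\setminus\tilde E = E\cap \bigcup_{Q\in\mathcal Q}Q$, and since the maximal dyadic cubes in $\mathcal Q$ are pairwise disjoint, $\chi_{E\setminus\tilde E}=\sum_{Q\in\mathcal Q}\chi_{E\cap Q}$. Since $S^*$ is sublinear (being a supremum of moduli), this yields the pointwise bound $S^*\chi_{E\setminus\tilde E}\leq \sum_Q S^*\chi_{E\cap Q}$, and the $L^2$ triangle inequality then reduces the task to summing $\sum_Q\|S^*\chi_{E\cap Q}\|_2$.

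For each cube $Q\in\mathcal Q$ I would apply Lemma~\ref{loc} with $f=\chi_{E\cap Q}$ (choosing the bump $\psi_Q$ to be identically $1$ on $Q$, so that $\psi_Q f=\chi_{E\cap Q}$) to obtain $\|S^*\chi_{E\cap Q}\|_2 \lesssim \delta^{1/2}|Q|^{1/4}|E\cap Q|^{1/2}$. The whole point of the selection criterion $|E\cap Q|\geq \beta_2|Q|^{1/2}$ is precisely to absorb the geometric $|Q|^{1/4}$ factor: it rearranges to $|Q|^{1/4}\leq \beta_2^{-1/2}|E\cap Q|^{1/2}$, which gives
\[
\|S^*\chi_{E\cap Q}\|_2 \lesssim \delta^{1/2}\beta_2^{-1/2}|E\cap Q|.
\]
Summing over the disjoint cubes and using $\sum_Q|E\cap Q|\leq|E|$ then yields $\|S^*\chi_{E\setminus\tilde E}\|_2\lesssim \delta^{1/2}\beta_2^{-1/2}|E|$; squaring and applying Chebyshev produces \eqref{eq2}.

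There is no substantial obstacle here; the argument is essentially bookkeeping once Lemma~\ref{loc} and the Calder\'on--Zygmund-type selection rule are in hand. The only minor technicality I would need to verify is that the maximal cubes from $\mathcal Q$ are contained in a bounded dilate of $B^2(0,\delta^{-1})$ so that Lemma~\ref{loc} applies directly. This should follow from the support of the amplitude $a$ in \eqref{def-S}, which effectively restricts the relevant portion of $E$ to $B^2(0,C\delta^{-1})$, together with the bound $|Q|^{1/2}\leq |E\cap Q|/\beta_2\leq|E|/\beta_2$, which caps the size of the maximal cubes in the first place.
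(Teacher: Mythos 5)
Your proof is correct and follows essentially the same route as the paper's: decompose $\chi_{E\setminus\tilde E}$ over the disjoint selected cubes, apply Lemma~\ref{loc} to each piece, use the selection rule $|E\cap Q|\geq\beta_2|Q|^{1/2}$ to convert $|Q|^{1/4}$ into $\beta_2^{-1/2}|E\cap Q|^{1/2}$, sum, and conclude by Chebyshev. The closing remark about the cubes lying inside a bounded dilate of $B^2(0,\delta^{-1})$ is a reasonable sanity check that the paper leaves implicit.
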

\begin{proof}
By the triangle inequality and use the local $L^2$ estimate (\ref{eq1}), we have
\begin{eqnarray*}
&& \|S^*(\chi_{E\setminus\tilde{E}})\|_2\lesssim\|S^*(\sum_{Q\in\mathcal{Q}}\chi_{E\cap Q})\|_2\lesssim\sum_{Q\in\mathcal{Q}}\|S^*(\chi_{E\cap Q})\|_2\\
&&\lesssim\sum_{Q\in\mathcal{Q}}\delta^{1/2}|Q|^{1/4}|E\cap Q|^{1/2}\lesssim\sum_{Q\in\mathcal{Q}}\delta^{1/2}\beta_2^{-1/2}|E\cap Q|=\delta^{1/2}\beta_2^{-1/2}|E|.
\end{eqnarray*}
Thus, from the Chebyshev inequality (\ref{eq2}) follows. 
\end{proof}

On the other hand,  from (\ref{L6}), we get
\begin{equation}\label{T1le}
\big| \big\{ x\in \mathbb R^2: \big| T_1\chi_{\ti E} \big|\geq \alpha \big\}\big| \lesssim \alpha^{-6} \delta^2 \beta_1^4 |E|\,.
\end{equation}
From (\ref{L-1-est}), we see that 
\begin{equation}
    \label{eq1--2}
   \big | \big\{x\in\mathbb R^2:  \big|T_2\chi_{\ti E} \big|\geq \alpha\}\big|\lessapprox \alpha^{-1}\beta_1^{-1}\delta^{-1/4}\beta_2 |E|.
\end{equation}
Using (\ref{tri-12}), (\ref{S*-wave}), (\ref{eq2}), (\ref{T1le}) and (\ref{eq1--2}), we end up with 
\begin{equation}\label{eqq}
|\{x\in \mathbb R^2: |S^*\chi_E(x,t)|\geq \alpha\}| 
\lessapprox \alpha^{-2}\delta\beta_2^{-1}|E|^2+\alpha^{-1}\beta_1^{-1}\delta^{-1/4}\beta_2|E|+\alpha^{-6}\delta^2\beta_1^4|E| \,.
\end{equation}
We can optimize \eqref{eqq} by taking 
$$\beta_1=\alpha \delta^{-13/36}|E|^{1/9}\,,\,\,\,\, \beta_2=\delta^{4/9}|E|^{5/9}\,. $$
Therefore we have 
\begin{equation} \label{eq-15}
    \|S^*\chi_E\|_{2,\infty}\lessapprox\delta^{5/18}|E|^{13/18}\,,
\end{equation}
which completes the proof of Proposition \ref{prop}.\\


\section{Appendix}
\setcounter{equation}0

In this section we present a proof for Proposition \ref{localization-prop}. We only focus on $1<p<2$.
It is sufficient to prove the following weak type estimate with $n=2$.  
\begin{lemma}
Let $\psi_{k,t}$ and $\tilde{K}_t^{\la_p}$ be defined as in \eqref{partition-of-unity} and \eqref{kernel-K}, respectively. Assume \eqref{main-result} is true and $1<p<2$. Then for $k\geq 1$, $\al>0$,
\begin{equation}
\label{level-set-al}
    \big|\{x\in \mathbb R^n: \sup_{t>0}|(\tilde{K}_t^{\la_p}\psi_{k,t})\ast f(x)|\geq \alpha\}\big|\lesssim_\e 2^{\e k}\alpha^{-p}\|f\|_p^p.
\end{equation}
\end{lemma}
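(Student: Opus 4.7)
The plan is to reduce the global maximal estimate for $\tilde{S}_{\ast,k}^{\la_p}$ to the local estimate \eqref{main-result} for $S^\ast$, via a change of variables that identifies the two operators at scale $\delta=2^{-k}$, followed by spatial localization, a Littlewood--Paley decomposition in frequency, and a dilation argument to handle all scales of $t$.

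The first step is to perform the change of variables $X=tx$, $Y=ty$ in the convolution integral defining $(\tilde{K}_t^{\la_p}\psi_{k,t})\ast f(x)$. The phase becomes $e^{i|X-Y|}$, the cutoff $\psi_k(X-Y)$ restricts to $|X-Y|\sim 2^k$, and the amplitude $|tz|^{-(n+1)/2-\la_p}$ contributes a factor of order $\delta^{\la_p}$ when $n=2$ (since $(n+1)/2+\la_p=3/2+\la_p$). For $t\sim 1$ this matches the structure of \eqref{def-S} up to an overall prefactor $\delta^{\la_p}$. The spatial support condition $|x|,|y|\lesssim\delta^{-1}$ imposed in the definition of $S$ is then handled by partitioning $\mathbb{R}^2$ into cubes $Q_\mu$ of side $\delta^{-1}$; by locality of the kernel, for $x\in Q_\mu$ only $f\chi_{\tilde{Q}_\mu}$ on a $C$-enlarged cube contributes, up to a rapidly decaying tail. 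Applying \eqref{main-result} to each piece (after translation to the origin) and summing over $\mu$ with the disjoint-support weak-type $p$-summation yields
\[
\big\|\sup_{t\sim 1}|(\tilde{K}_t^{\la_p}\psi_{k,t})\ast f|\big\|_{p,\infty}\lesssim_\e \delta^{\la_p}\cdot\delta^{-\la_p-\e}\|f\|_p = 2^{\e k}\|f\|_p.
\]

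To extend from $t\sim 1$ to all $t>0$, I would use the dilation identity $(\tilde{K}_t^{\la_p}\psi_{k,t})\ast f_\rho(x)=[(\tilde{K}_{\rho t}^{\la_p}\psi_{k,\rho t})\ast f](x/\rho)$, which for $\rho=2^{-j}$ reduces the estimate on the dyadic block $t\sim 2^j$ to the $t\sim 1$ case already obtained, applied to a rescaled $f$. The kernel $\tilde{K}_t^{\la_p}\psi_{k,t}$ has Fourier transform essentially supported in a $t\delta$-neighborhood of the sphere $|\xi|=t$, so if $P_j$ denotes the Littlewood--Paley projection onto $|\xi|\sim 2^j$, only $P_j f$ contributes at $t\sim 2^j$. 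Thus $\tilde{S}_{\ast,k}^{\la_p}f(x)\lesssim \sup_j G_j(x)$ with $G_j(x):=\sup_{t\sim 2^j}|(\tilde{K}_t^{\la_p}\psi_{k,t})\ast P_j f(x)|$, and the rescaling gives $\|G_j\|_{p,\infty}\lesssim_\e 2^{\e k}\|P_j f\|_p$ uniformly in $j$.

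The main obstacle is combining the $G_j$'s into a single weak-$L^p$ bound on $\sup_j G_j$: the naive union bound produces $\sum_j\|P_j f\|_p^p$, which is not controlled by $\|f\|_p^p$ when $1<p<2$. To overcome this I would use the trivial $L^\infty$ bound on each $G_j$ coming from the $L^1$-norm of the kernel, together with Marcinkiewicz interpolation between this strong $(\infty,\infty)$ estimate and the weak $(p,p)$ estimate, to promote the bound to strong $L^q$ for $q$ slightly above $p$, and then invoke a Fefferman--Stein type vector-valued inequality to estimate $\|(\sum_j|G_j|^2)^{1/2}\|_p$ by $\|(\sum_j|P_j f|^2)^{1/2}\|_p\sim\|f\|_p$ via Littlewood--Paley. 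Controlling this vector-valued passage, while preserving the $2^{\e k}$ loss, is the technically delicate part of the argument.
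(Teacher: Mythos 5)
Your first step --- the change of variables $X = tx$, $Y = ty$, the computation $(n+1)/2 + \la_p = 3/2 + \la_p$ giving the prefactor $\de^{\la_p}$ with $\de = 2^{-k}$, and the spatial localization into $\de^{-1}$-cubes followed by an application of \eqref{main-result} --- is correct and in fact corresponds precisely to how the paper treats one of the three regimes in its argument (the ``medium cube'' case, see \eqref{strong-p-p} and the paragraph following it). The difficulty you identify in the third paragraph is also the right one: for $1<p<2$ you cannot sum $\|P_jf\|_p^p$ over $j$. But the repair you propose is where the argument breaks down.

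The vector-valued step is not merely ``technically delicate''; it is missing in a way that the ingredients you list cannot fill. Each $G_j$ is a supremum over $t\sim 2^j$ of convolution against an oscillatory kernel, and there is no general theorem that upgrades a weak-$(p,p)$ bound on each $G_j$ to the $\ell^2$-valued inequality $\|(\sum_j|G_j|^2)^{1/2}\|_p \lesssim \|(\sum_j|P_jf|^2)^{1/2}\|_p$. The Fefferman--Stein inequality is specific to the Hardy--Littlewood maximal function; for other operators a vector-valued bound requires genuine kernel structure (a Calder\'on--Zygmund regularity estimate), which $e^{i|x|}|x|^{-3/2-\la_p}$ does not have in the usual sense. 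Moreover, the interpolation endpoint you propose is problematic: the $L^1$ norm of $\tilde{K}_t^{\la_p}\psi_{k,t}$ on its support $|tx|\sim 2^k$ is of size $2^{k(1/2-\la_p)}$, and since $\la_p < 1/2$ throughout $1<p<2$, the $L^\infty\to L^\infty$ constant you would be interpolating against grows like a positive power of $2^k$ --- this would destroy the $2^{\e k}$ loss you need.

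The paper takes the standard route for weak-$(p,p)$ bounds: a Calder\'on--Zygmund decomposition of $|f|^p$ at height $1$, writing $f = g + \sum_Q b_Q$ with $b_Q$ supported on $Q$, $\|b_Q\|_p \lesssim |Q|^{1/p}$, and vanishing moments of high order $N$. The good part $g$ is handled by the $L^2$ bound. For the bad parts one first splits $t>0$ into dyadic blocks $J_{2km+r}$, indexed by residue class mod $2k$ (this costs a harmless factor $2k \lesssim 2^{\e k}$), and for each $m$ sorts the cubes into three regimes according to their size relative to the kernel's spatial support radius $\sim 2^{k-2km}$: large cubes contribute only inside $\cup_Q CQ$; small cubes are killed pointwise by the moment conditions against the oscillation (since the kernel's Fourier transform lives far from $0$); and medium cubes --- which, because of the mod-$2k$ splitting, are assigned to exactly one block $m$ --- are handled by rescaling to $t\sim 1$ and invoking \eqref{main-result}, exactly your first step. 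This decomposition sidesteps the vector-valued obstacle entirely, because the disjointness of the cubes (at essentially a single scale per $m$) gives $\|\sum_Q b_Q\|_p^p \lesssim \sum_Q |Q|$ directly.
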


  The proof of (\ref{level-set-C}) is based on 
a standard Carlder\'on-Zygmund type argument. 
In this article, we only need to use the result (\ref{level-set-al}) in the planar case $n=2$.
However, the method below works for general $n$ and this is the reason why we employ $n$ instead of $2$.
\\ 

Since the operator $\tilde{S}_{\ast,k}^{\la_p}$ defined in \eqref{asym-expanded-MBR} is a sub-linear operator, we can absorb the magnitude $\alpha$ into $f$, so that it suffices to show
\begin{equation}
\label{level-set-1}
    \big|\{ x\in\mathbb R^n: \sup_{t>0}|(\tilde{K}_t^{\la_p}\psi_{k,t})\ast f(x)|\geq 1\}\big|\lesssim_\e 2^{\e k}\|f\|_p^p.
\end{equation}
For some technical reasons, we will instead prove
\begin{equation}
\label{level-set-C}
    \big|\{ x\in\mathbb R^n: \sup_{t>0}|(\tilde{K}_t^{\la_p}\psi_{k,t})\ast f(x)|\geq C\}\big|\lesssim_\e 2^{\e k}\|f\|_p^p.
\end{equation}
Here $C$ is an absolute big constant depends on $\e$. In fact, $C$ only depends on our choice of the partition of unity \eqref{partition-of-unity}.  The desired estimate \eqref{level-set-1} follows from \eqref{level-set-C} since the operator $\tilde{S}_{\ast,k}^{\la_p}$ is sub-linear.
\vspace{3mm}

Now we begin to prove \eqref{level-set-C}. Let $J_{j}$ be the time interval $[2^j,2^{j+1}],~j\in\mathbb{Z}$. Partitioning $\mathbb{Z}$ into residue classes mod $2k$, we can write 
\begin{equation}
    \ZR^+=\bigcup_{j\in\mathbb{Z}}J_j=\bigcup_{0\leq r\leq 2k-1}\bigcup_{m\in\mathbb{Z}}J_{2km+r}.
\end{equation}
With a translation argument, we can treat $\cup_{m}J_{2km+r}$ similarly for different $r$. Thus, by losing $2k$, which can be absorbed in $2^{\e k}$ in summation over $r$, it suffices to consider the case $r=0$, that is, 
\begin{equation}
    \label{level-set-esti}
    \Big|\Big\{x\in\mathbb R^n: \sup_{t\in\bigcup_m J_{2km}}|(\tilde{K}_t^{\la_p}\psi_{k,t})\ast f(x)|\geq C\Big\}\Big|\lesssim_\e 2^{\e k}\|f\|_p^p\,.
\end{equation}

Next we adapt a Calderon-Zygmund decomposition on $|f|^p$ at level 1 to get\footnote{One can find the decomposition in \cite{St2} Chapter 3.}
$$f=g+\sum_Qb_Q,$$
where

\begin{equation}\label{b-g}
  \|g\|_\infty \lesssim1,
\end{equation}
\begin{equation}
  {\rm supp}(b_Q)\subset Q,
 \end{equation}
 and 
 \begin{equation}\label{s-bq}
  \|b_Q\|_p\lesssim|Q|^{1/p}.
\end{equation}
Here $Q$'s are maximal dyadic cubes and we use $\cQ$ to denote the collection of all possible 
$Q$'s. Moreover, $b_Q$ satisfies the moment conditions
\begin{equation}
\label{moment-condi}
    \int_{\ZR^n}b_Q(x)x^rdx=0, \hspace{1cm} 0\leq |r|\leq N,
\end{equation}
where $r$ is a multi-index in $(\ZN\cup \{0\})^n$.  In the rest of the section, we use 
$c(Q)$ to denote the center of $Q$, and $2^{l(Q)}$ to stand for the side length of the dyadic cube 
$Q$.  \\ 

Because of (\ref{b-g}) and $p\in(1, 2)$, we see that $\|g\|_2^2\lesssim \|g\|_p^2$.
In addition, the operator $\tilde{K}_t^{\la_p}\psi_{k,t}* g$ is bounded in $L^2$ for $\la_p>0$.
Following from the Chebyshev inequality,  it is easy to see that the contribution from the function $g$ satisfies the desired estimate. Henceforth, it remains to prove
\begin{equation}\label{b-Q}
\Bigg|\Bigg\{x\in\ZR^n: \sup_{m\in\ZZ}\sup_{t\in J_{2km}}\Bigg|(\tilde{K}_t^{\la_p}\psi_{k,t})\ast \Bigg(\sum_{Q\in\cq} b_Q\Bigg)(x)\Bigg|\geq C\Bigg\}\Bigg|\lesssim_\e 2^{\e k}
\sum_{Q\in\cq}|Q|.
\end{equation}

\vspace{3mm}

 For fixed $m$, we sort $Q\in\cq$ in terms of the size $l(Q)$ by letting
$$\cQ=E^1_{k,m}\cup E^2_{k,m}\cup E^3_{k,m},$$
where $E^1_{k,m}=\{Q\in\cQ:l(Q)\leq -2km-k\}$, $E^2_{k,m}=\{Q\in\cQ:-2km-k< l(Q)\leq -2km+k\}$, and $E^3_{k,m}=\{Q\in\cQ:-2km+k<l(Q)\}$. 
As a result, we only need to prove \eqref{b-Q} with $\cq$ replaced by $E^j_{k,m},i=1,2,3$.

\vspace{3mm}

We first work on $E^3_{k,m}$. Recall that the kernel $\tilde{K}_t^{\la_p}\psi_{k,t}$ equals to
\begin{equation}
    t^ne^{i|tx|}|tx|^{-(n+1)/2-\lambda}\psi_k(tx).
\end{equation}
Hence for fixed $m$, ${\rm supp}(\tilde{K}_t^{\la_p}\psi_{k,t})\subset B^n(0,C2^{k-2km})$. This implies the support of $(\tilde{K}_t^{\la_p}\psi_{k,t})\ast b_Q$ is contained in $\bigcup_Q CQ$, where $CQ$ is the cube centered at $c(Q)$ and with side length $C2^{l(Q)}$. Combining with the following equation

\begin{eqnarray} 
\label{union-m}
    &&\Bigg\{x\in\ZR^n: \sup_{m\in\mathbb{Z}}\sup_{t\in J_{2km}}\Bigg|(\tilde{K}_t^{\la_p}\psi_{k,t})\ast \Bigg(\sum_{Q\in E^3_{k,m}} b_Q\Bigg)(x)\Bigg|\geq C\Bigg\}\\ \nonumber
    &=&\bigcup_{m\in\ZZ}\Bigg\{x\in\ZR^n:\sup_{t\in J_{2km}}\Bigg|(\tilde{K}_t^{\la_p}\psi_{k,t})\ast \Bigg(\sum_{Q\in  E^3_{k,m}} b_Q\Bigg)(x)\Bigg|\geq C\Bigg\},
\end{eqnarray}
we then have
\begin{equation}
    \Bigg|\Bigg\{x\in\ZR^n:\sup_{m\in\ZZ}\sup_{t\in J_{2km}}\Bigg|(\tilde{K}_t^{\la_p}\psi_{k,t})\ast \Bigg(\sum_{Q\in E^3_{k,m}} b_Q\Bigg)(x)\Bigg|\geq C\Bigg\}\Bigg|\lesssim 
\sum_{Q\in\cQ}|Q|,
\end{equation}
as desired, because the set in the left side is a subset of $\cup_{Q\in\mathcal Q} C Q$.

\vspace{3mm}

Next, we consider $E^1_{k,m}$. We will use the moment conditions \eqref{moment-condi} and  aim to show the set
\begin{equation}
\label{E-1}
    \Bigg\{x\in\ZR^n: \sup_{m\in\mathbb{Z}}\sup_{t\in J_{2km}}\Bigg|(\tilde{K}_t^{\la_p}\psi_{k,t})\ast \Bigg(\sum_{Q\in E^1_{k,m}} b_Q\Bigg)(x)\Bigg|\geq C\Bigg\}
\end{equation}
is an empty set. As a consequence, \eqref{b-Q} follows with $\cq$ replaced by $E^1_{k,m}$. Since the equation \eqref{union-m} is true for $E^3_{k,m}$ replaced by $E^1_{k,m}$, it suffices to show that for fixed $m\in\ZZ$,
\begin{equation}\label{emp}
    \Bigg\{x\in\ZR^n: \sup_{t\in J_{2km}}\Bigg|(\tilde{K}_t^{\la_p}\psi_{k,t})\ast \Bigg(\sum_{Q\in E^1_{k,m}} b_Q\Bigg)(x)\Bigg|\geq C\Bigg\}=\varnothing.
\end{equation}

To see why (\ref{emp}) makes sense, let us take a quick look at what the moment conditions give us in the frequency space. First,  \eqref{moment-condi} yields that $ (D^r\widehat{b}_Q)(0)=0$ for $|r|\leq N$, where $D$ is the differential operator in $\ZR^n$. This tells us that intuitively, in a small neighbourhood of 0, $\widehat{b}_Q(\xi)\approx0$. On the other hand, observe that the Fourier support of $\tilde{K}_t^\lambda\psi_{k,t}$ is concentrated in the ball $B^n(0,2km)$. The radius of the ball $B^n(0,2km)$ is small compare to $l(Q)^{-1}$, for $Q\in E^1_{k,m}$. Therefore, via Fourier inverse formula, the pointwise value $|\tilde{K}_t^\lambda\psi_{k,t}\ast b_Q(x)|$ should be small, from which we see that (\ref{emp}) does make sense. \\

 We now turn to our rigorous argument for making a proof of (\ref{emp}). 
By a translation argument, without loss of generality, we assume $Q$ is centered at the origin. Abbreviating $l(Q)$ as $l$ and using Taylor's formula, we see that for multi-indices $r_1,r_2\in (\ZN\cup \{0\})^n$, $|r_1|=N-1$, $|r_2|=N$, we have
\begin{equation}
\label{derivative-condition}
    |(D^{r_1}\widehat{b}_Q)(\xi)|\leq|\xi|\cdot\|D^{r_2}\widehat{b}_Q\|_\infty\leq|\xi|\cdot\int |x|^{|r_2|}|b_Q|dx\leq|\xi|\cdot2^{lN}\|b_Q\|_1.
\end{equation}
From H\"older's inequality we have
$$\|b_Q\|_1\leq\|\chi_Q\|_{p'}\|b_Q\|_p\lesssim|Q|,$$
which, together with \eqref{derivative-condition}, implies 
\begin{equation}
    |(D^{r_1}\widehat{b}_Q)(\xi)|\lesssim|\xi|\cdot2^{lN}|Q|.
\end{equation}
Noticing $(D^{r}\widehat{b}_Q)(0)=0$ for $|r|\leq N$, we expand $\widehat{b}_Q(\xi)$ into its Taylor series at 0 and use the remainder formula for the $(N-1)$-th term to get
\begin{equation}
\label{hat-b_Q}
    |\widehat{b}_Q(\xi)|\lesssim|\xi|^N2^{lN}|Q|.
\end{equation}

On the other hand, we calculate the Fourier transform of the kernel  $\tilde{K}_t^{\lambda_p}\psi_{k,t}$ to see 
\begin{equation}
\label{kernel-appendix}
    \big|\widehat{\tilde{K}_t^{\la_p}\psi_{k,t}}(\xi)\big|\approx2^{-\la_p k}\Big|\widehat{\psi}\left(\frac{|\xi|-2^{2km}}{2^{2km}2^{-k}}\right)\Big|.
\end{equation}
Here the right side of \eqref{kernel-appendix} is the principal contribution of the Fourier transform of $\ti{K}_t^{\lambda_p}\psi_{k, t}$.
It follows from the pointwise estimate $\big|\widehat{\psi}(|\xi|)\big|\lesssim(1+|\xi|)^{-M}$ that for $|\xi|$ much larger than $2^{2km}$,
\begin{equation}
\label{hat-kernel-K}
    |\widehat{\tilde{K}_t^{\la_p}\psi_{k,t}}(\xi)|\lesssim\Bigg(\frac{|\xi|-2^{2km}}{2^{2km}2^{-k}}\Bigg)^{-M}.
\end{equation}

We choose $M>2N$ and $N$ much larger than $n$. Since $(\tilde{K}_t^{\la_p}\psi_{k,t})\ast b_Q$ is supported in $B(0,C2^{k-2km})$, combining \eqref{hat-b_Q} and \eqref{hat-kernel-K}, we obtain the pointwise estimate
\begin{eqnarray}
\label{pointwise-estimate}
    |(\tilde{K}_t^{\la_p}\psi_{k,t})\ast b_Q(x)| &\lesssim& \|\widehat{\tilde{K}_t^{\la_p}\psi_{k,t}}
    \widehat{b}_Q\|_1\chi_{B^n(c(Q),C2^{-2km+k})}(x)\\ \nonumber
    &\lesssim&2^{lN}2^{2kmN}2^{2kmn} |Q|\chi_{B^n(c(Q),C2^{-2km+k})}(x).
\end{eqnarray}
Since the dyadic cubes in $E^1_{k,m}$ are mutually disjoint, for fixed $x$, there are  at most $O(2^{-2kmn+kn}|Q|^{-1})$ many cubes $Q$'s of length $2^l$ that make contribution to the sum $\sum_{Q\in E^1_{k,m}}(\tilde{K}_t^{\la_p}\psi_{k,t})\ast b_Q(x)$. Hence, we can sum up all the $Q$'s with $l(Q)=2^l$ and invoke \eqref{pointwise-estimate} to have
\begin{equation}
    \sup_{t\in J_{2km}}\Bigg|(\tilde{K}_t^{\la_p}\psi_{k,t})\ast \Bigg(\sum_{Q\in E^1_{k,m},l(Q)=2^l}b_Q\Bigg)(x)\Bigg|\lesssim2^{lN}2^{2kmN}2^{kn}.
\end{equation}

Finally, as $N$ is much larger than $n$, we can sum up all the $l$'s with $l\leq-2km-k$ to get the following pointwise estimate
\begin{equation}
    \label{c-z-type}
    \sup_{t\in J_{2km}}\Bigg|(\tilde{K}_t^{\la_p}\psi_{k,t})\ast \Bigg(\sum_{Q\in E^1_{k,m}}b_Q\Bigg)(x)\Bigg|\lesssim2^{-kN/2}.
\end{equation}
This pointwise estimate 
shows that if we choose $C$ large enough, the set 
\begin{equation}
    \Bigg\{x\in\ZR^n: \sup_{t\in J_{2km}}\Bigg|(\tilde{K}_t^{\la_p}\psi_{k,t})\ast \Bigg(\sum_{Q\in E^1_{k,m}} b_Q\Bigg)(x)\Bigg|\geq C\Bigg\}
\end{equation}
is an empty set, uniformly for all $m\in\ZZ$. Henceforth, we finish the proof of (\ref{emp}), from which \eqref{E-1} follows.

\vspace{3mm}

Finally,  it remains to show the estimate \eqref{b-Q} with $\cq$ replaced by $E^2_{k,m}$.
Since \eqref{union-m} holds for $E^2_{k,m}$ as well, we only need to show the following estimate for a single $m\in\ZZ$
\begin{equation}
\label{weak-t-2km}    
    \Bigg|\Bigg\{x\in\ZR^n:\sup_{t\in J_{2km}}\Bigg|(\tilde{K}_t^{\la_p}\psi_{k,t})\ast\Bigg( \sum_{\substack{Q\in E^2_{k,m}}} b_Q\Bigg)\Bigg|\geq C\Bigg\}\Bigg|\lesssim_\e2^{\e k}\sum_{\substack{Q\in E^2_{k,m} }}|Q|.
\end{equation}
Without loss of generality, we can assume for free that all $Q\in E_{k, m}^2$'s have the same size because 
$l(Q)$ taking at most $O(k)$ many values between $-2km-k$ and $-2km+k$.  
Thus we have the disjointness property of those $Q$'s in $E_{k, m}^2$ and now it is clear that 
\begin{equation}
 \big\| \sum_{Q\in E^{2}_{k,m}} b_Q \big\|_p^p \lesssim \sum_{Q\in E^2_{k,m}} \big|Q\big|
\end{equation}
by (\ref{s-bq}). Hence, 
(\ref{weak-t-2km}) follows from a dilation argument and  the following stronger result
\begin{equation}
\label{strong-p-p}
    \big\|\sup_{t\in[1,2]}|(\tilde{K}_t^{\la_p}\psi_{k,t})\ast f|\big\|_p\lesssim_\e 2^{\e k}\|f\|_p\,.
\end{equation}

(\ref{strong-p-p}) is a consequence of (\ref{main-result}). Indeed, 
let $\vp_1(t)$ be a smooth function supported on $[3/4,9/4]$,  taking value 1 for $t\in[1,2]$. As a result, we can replace the kernel 
$\tilde{K}_t^{\la_p}\psi_{k,t}$ by $\vp_1(t)\tilde{K}_t^{\la_p}\psi_{k,t}$ in \eqref{strong-p-p}.
Notice that in the planar case, when $f$ is supported $B^2(0,\de^{-1})$, the function $(\vp_1(t)\tilde{K}_t^{\la_p}\psi_{k,t})\ast f$ is morally $\de^{\la_p}Sf(x,t)$ in \eqref{def-S}, with $\de=2^{-k}$.
Via a standard localization trick on the support of $f(x)$,  we see that \eqref{main-result} implies \eqref{strong-p-p}. Therefore, we finish the proof of \eqref{level-set-al}.  \qed




\vspace{0.6cm}

\end{document}